\DeclareMathOperator*{\minimize}{minimize}
\def\trt{^{\scriptscriptstyle T}}
\begin{document}

\title{On the solution of monotone nested variational inequalities}


\author{Lorenzo Lampariello \and Gianluca Priori \and Simone Sagratella
}


\institute{L. Lampariello \at
              Department of Business Studies, Roma Tre University, Italy \\
         \email{lorenzo.lampariello@uniroma3.it}           
    \and
    G. Priori and S. Sagratella \at
    		  Department of Computer, Control and Management Engineering Antonio Ruberti, Sapienza University of Rome, Italy \\
         \email{priori,sagratella@diag.uniroma1.it}           
}

\date{Received: date / Accepted: date}

\maketitle

\begin{abstract}
We study nested variational inequalities, which are variational inequalities whose feasible set is the solution set of another variational inequality.
We present a projected averaging Tikhonov algorithm requiring the weakest conditions in the literature to guarantee the convergence to solutions of the nested variational inequality.
Specifically, we only need monotonicity of the upper- and the lower-level variational inequalities.
Also, we provide the first complexity analysis for nested variational inequalities
considering optimality of both the upper- and lower-level.

\keywords{Nested variational inequality \and Purely hierarchical problem \and Tikhonov method \and Complexity analysis}
 \subclass{90C33 \and 90C25 \and 90C30 \and 65K15 \and 65K10}
\end{abstract}

\section{Introduction}
We focus on solving (upper-level) variational inequalities whose feasible set is given by the solution set of another (lower-level) variational inequality.
These problems are commonly referred to as nested variational inequalities \textcolor{black}{and they represent a flexible modelling tool when it comes to solving, for instance, problems arising in finance (see, e.g., the most recent \cite{lampariello2021equilibrium} for an application in the multi-portfolio selection context) as well as several other fields (see, e.g., \cite{facchinei2014vi,scutari2012equilibrium} for resource allocation problems in communications and networking). }  

\textcolor{black}{As far as the literature on nested VIs is concerned, it is still in its infancy if compared to the bilevel instance of hierarchical optimization or, more generally, bilevel structures as in \cite{dempe2002foundations,lampariello2017bridge,lampariello2020numerically,lampariello2019standard}. However, there are two main solution methods that are most adopted in the field-related literature: hybrid-like techniques (see, e.g., \cite{lu2009hybrid,marino2011explicit,yamada2001hybrid}) and Tikhonov-type schemes (see, e.g. \cite{lampariello2020explicit} for the latest developments and \cite{kalashnikov1996solving} for some earlier developments, as well as \cite{facchinei2014vi} and the references therein). It should be pointed out that hybrid-like procedures very often require particularly strong assumptions (e.g. demanding co-coercivity of the lower-level map as in \cite{lu2009hybrid,marino2011explicit,yamada2001hybrid}) in order to ensure convergence or to work properly at all. Hence, we rely on the Tikhonov paradigm drawing from the general schemes proposed in \cite{facchinei2014vi} and \cite{lampariello2020explicit}, however asking for less stringent assumptions and combining the Tikhonov approach with a new averaging procedure.}   

We widen the scope and expand the applicability of nested variational inequalities by showing for the first time in related literature that solutions can be provably computed in the more general framework of simply monotone upper- and lower-level variational inequalities. Specifically, in \cite{facchinei2014vi,lampariello2020explicit}, where as far as we are aware the most advanced results are obtained, the upper-level map is required to be monotone plus. Relying on a combination of a Tikhonov approach with an averaging procedure, the algorithm we propose is shown to converge provably to a solution of a nested variational inequality where the upper-level map is required to be just monotone. 

We also obtain complexity results for our method. Except for \cite{lampariello2020explicit}, not only does this analysis represent the only other complexity study in the literature of nested variational inequalities, it is also the first one in the field dealing with upper-level optimality.

\section{Problem definition and motivation}\label{sec:mot}
Let us consider the nested variational inequality $\text{VI}\big(G, \text{SOL}(F,Y)\big)$,
where $G: \mathbb R^{n} \to \mathbb R^n$ is the upper-level map, and $\text{SOL}(F,Y)$ is the solution set of the lower-level VI$(F,Y)$. \textcolor{black}{We recall that, given a subset $Y \subseteq \mathbb R^n$ and a mapping $F: \mathbb R^{n} \to \mathbb R^n$, the variational inequality VI$(F,Y)$ is the problem of computing a vector $x\in \mathbb R^n$ such that
\begin{equation}\label{eq:VIbasic}
x \in Y, \quad F(x)\trt (y-x) \ge 0,\quad\forall\,y\in Y.
\end{equation}  }
In other words, $\text{VI}\big(G, \text{SOL}(F,Y)\big)$ is the problem of finding $x \in \mathbb R^n$ that solves
\begin{equation}\label{eq:pureh}
x \in \text{SOL}(F,Y), \quad G(x)\trt (y-x) \ge 0,\quad\forall\,y\in \text{SOL}(F,Y).
\end{equation} 
\textcolor{black}{As it is clear from \eqref{eq:pureh}, the feasible set of $\text{VI}\big(G, \text{SOL}(F,Y)\big)$ is implicitly defined as the solution set of the lower-level VI$(F,Y)$.}
The nested variational inequality \eqref{eq:pureh} we consider has a purely hierarchical structure in that the lower-level problem \eqref{eq:VIbasic} is non parametric with respect to the upper-level variables, \textcolor{black}{unlike the more general bilevel structures presented in \cite{lampariello2020numerically}.
Under mild conditions, VIs equivalently reformulate NEPs so that in turn, by means of structure \eqref{eq:pureh}, we are able to model well-known instances of bilevel optimization and address multi-follower games. }

We introduce the following blanket assumptions which are widely adopted in the literature of solution methods for variational inequalities:
\begin{enumerate}
\item[(A1)]
the upper-level map $G$ is Lipschitz continuous with constant $L_{G}$ and monotone on $Y$;   
\item[(A2)]
the lower-level map $F$ is Lipschitz continuous with constant $L_F$ and monotone on $Y$;
\item[(A3)]
 the lower-level feasible set $Y$ is nonempty, convex and compact.
\end{enumerate}

\noindent
Due to (A2) and (A3), SOL$(F,Y)$ is a nonempty, convex, compact and not necessarily single-valued set, see e.g. \cite[Section 2.3]{FacchPangBk}. As a consequence, the feasible set of the nested variational inequality \eqref{eq:pureh} is not necessarily a singleton. Moreover, thanks to (A1), the solution set of the nested variational inequality \eqref{eq:pureh} can include multiple points. 

Notice that assumption (A1) on the upper-level map $G$ is much less demanding than the one required in \cite{facchinei2014vi,lampariello2020explicit}. Specifically, here we assume $G$ to be only monotone, while in \cite{facchinei2014vi,lampariello2020explicit} it must be monotone plus.

\textcolor{black}{For the sake of completeness, we recall that a mapping $G: Y \subseteq \mathbb R^{n} \to \mathbb R^n$ is said to be monotone \textit{plus} on $Y$ if both the following conditions hold:
\begin{enumerate}
    \item G is monotone on $Y$, i.e. $(G(x)-G(y))\trt (x-y) \geq 0 \quad \forall \; x,y \in Y$; 
    \item $(x-y)\trt (G(x)-G(y))=0 \Rightarrow G(x)=G(y) \quad \forall \; x,y \in Y$.
\end{enumerate}
Consequently, we are now able to dispose of the monotonicity \textit{plus} assumption on operator $G$, which inevitably represents a more stringent condition when compared to plain monotonicity, thus hereby asking for $G$ to be simply monotone. Indeed, whenever the upper-level map G is nonsymmetric, requiring G to be monotone \textit{plus} is ``slightly less'' than assuming G to be strongly monotone (see, e.g. \cite{bigi2021combining}).}    
In fact, the main objective of this work is to define, for the first time in the field-related literature, an algorithm (Algorithm \ref{al:algo}) which is able to compute solutions of the monotone nested variational inequality \eqref{eq:pureh}, under the weaker assumptions (A1)-(A3), see the forthcoming Theorem \ref{th:conv2}.

In addition, we study complexity properties of Algorithm \ref{al:algo} in detail, see Theorems \ref{th:ftermination} and \ref{th:ftermination easy}.
We highlight that all steps in Algorithm \ref{al:algo} can be readily implemented and no nontrivial computations are required, see e.g. the numerical illustration in section \ref{sec: numerical}.

Summarizing,
\begin{itemize}
 \item we show that the algorithm we propose is globally (subsequential) convergent to solutions of monotone nested variational inequalities under the weakest conditions in the literature,
 \item we provide the first complexity analysis for nested variational inequalities considering optimality of both the upper- and lower-level (instead in \cite{lampariello2020explicit} just lower-level optimality is contemplated).
\end{itemize}

\section{A projected averaging Tikhonov algorithm}\label{sec:algo}

For the sake of notation, let us introduce the following operator:
\[
\Phi_\tau(x) \triangleq F(x) + \frac{1}{\tau} G(x),
\]
which is the classical operator used to define subproblems in Tikhonov-like methods.
For any $\tau \in \mathbb R_{++}$, by assumptions (A1) and (A2), $\Phi_\tau$ is monotone and Lipschitz continuous with constant $L_\Phi \triangleq L_{F} + L_{G}$ on $Y$.
Moreover, the following finite quantities are useful in the forthcoming analysis:
$$
H \triangleq \max_{y \in Y} \|G(y)\|_2, \quad R \triangleq \max_{y \in Y} \|F(y)\|_2, \quad D \triangleq \max_{v, y \in Y}\|v - y\|_2.
$$
\textcolor{black}{For the sake of clarity, let us recall that, by definition, the Euclidean projection $P_Y(x)$ of a vector $x \in \mathbb{R}^n$ onto a closed convex subset $Y \subseteq \mathbb{R}^n$ is the unique solution of the strongly convex (in $y$) problem
\begin{equation*}
\begin{aligned}
\minimize_{y} \quad & \frac{1}{2} \; (y-x)\trt (y-x)\\
\textrm{s.t.} \quad & y \in Y.\\
\end{aligned}    
\end{equation*}
The solution of the latter problem is a unique vector $\bar y \in Y$ that is closest to $x$ in the Euclidean norm (see, e.g. \cite[Th. 1.5.5]{FacchPangBk} for an exhaustive overview on the Euclidean projector and its properties).} 

\textcolor{black}{In our analysis we rely on approximate solutions of VIs. Specifically, we say that $x \in K$ approximately solves VI$(\Psi,K)$ (with $\Psi$ continuous and $K$ convex and compact) if 
\begin{equation}\label{eq: inexact optimality}
 \Psi (x)^\top(y-x) \geq -\varepsilon, \quad \forall \, y \in K,
\end{equation}
where $\varepsilon \geq 0$. Relation \eqref{eq: inexact optimality}, for example, when VI$(\Psi,K)$ defines the first-order optimality conditions of a convex problem, guarantees that the problem is solved up to accuracy $\varepsilon$. Relation \eqref{eq: inexact optimality}, in view of assumption (A3), is equivalent to
\begin{equation}\label{eq:stopping general}
\min_{y \in K} \; \Psi (x)^\top(y-x) \geq -\varepsilon.
\end{equation}
We remark that $\Psi (x)^\top(y-x)$ is linear in $y$; moreover, if $K$ is polyhedral (as, e.g., in the multi-portfolio selection context) computing $\min_{y \in K} \; \Psi (x)^\top(y-x)$ amounts to solving a linear optimization problem. In any event, we assume this computation to be easy to do in practice.}

With the following result we relate approximate solutions of the VI subproblem
\begin{equation}\label{eq: subproblem inexact optimality}
 \Phi_\tau (x)^\top(y-x) \geq -\varepsilon_{\text{sub}}, \quad \forall \, y \in Y,
\end{equation}
where $\varepsilon_{\text{sub}} \geq 0$, with approximate solutions of problem \eqref{eq:pureh}.

\begin{proposition}\label{th: upper level optimality}
Assume conditions (A1)-(A3) to hold, and let $x \in Y$ be a solution of the VI subproblem \eqref{eq: subproblem inexact optimality}
with $\tau > 0$ and $\varepsilon_{\text{sub}} \geq 0$.
It holds that
\begin{equation}\label{eq: problem inexact optimality}
G(x)^\top (y-x) \geq -\varepsilon_{\text{up}}, \quad \forall y \in \text{SOL}(F,Y),
\end{equation}
with $\varepsilon_{\text{up}} \geq \varepsilon_{\text{sub}} \tau$, and
\begin{equation}\label{eq: problem inexact feasibility}
F(x)^\top (y-x) \geq -\varepsilon_{\text{low}}, \quad \forall y \in Y,
\end{equation}
with $\varepsilon_{\text{low}} \geq \varepsilon_{\text{sub}} + \frac{1}{\tau} H D$.
\end{proposition}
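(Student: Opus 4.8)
The plan is to work directly from the defining inequality of the subproblem, \eqref{eq: subproblem inexact optimality}, which by the definition of $\Phi_\tau$ reads
$F(x)^\top(y-x) + \frac{1}{\tau} G(x)^\top(y-x) \ge -\varepsilon_{\text{sub}}$ for every $y \in Y$,
and to extract the two claimed estimates by isolating, in turn, $F(x)^\top(y-x)$ and $G(x)^\top(y-x)$ and controlling the remaining cross term in the appropriate direction. No tools beyond (A1)--(A3), the Cauchy--Schwarz inequality, and the definition of the lower-level VI are needed.

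For the lower-level feasibility bound \eqref{eq: problem inexact feasibility}, I would fix an arbitrary $y \in Y$, rearrange the displayed inequality to $F(x)^\top(y-x) \ge -\varepsilon_{\text{sub}} - \frac{1}{\tau} G(x)^\top(y-x)$, and estimate the last term by $|G(x)^\top(y-x)| \le \|G(x)\|_2\,\|y-x\|_2 \le HD$, where $\|G(x)\|_2 \le H$ because $x \in Y$ and $\|y-x\|_2 \le D$ because $x,y \in Y$; all three constants are finite by (A1) and (A3). This yields $F(x)^\top(y-x) \ge -\varepsilon_{\text{sub}} - \frac{1}{\tau} HD$, i.e. \eqref{eq: problem inexact feasibility} with $\varepsilon_{\text{low}} = \varepsilon_{\text{sub}} + \frac{1}{\tau} HD$, hence with any larger value.

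For the upper-level optimality bound \eqref{eq: problem inexact optimality}, I would fix an arbitrary $y \in \text{SOL}(F,Y) \subseteq Y$, rearrange the displayed inequality, and multiply through by $\tau > 0$ to obtain $G(x)^\top(y-x) \ge -\varepsilon_{\text{sub}}\tau - \tau\, F(x)^\top(y-x)$. The crucial point is then that $F(x)^\top(y-x) \le 0$: since $y$ solves $\text{VI}(F,Y)$ and $x \in Y$, we have $F(y)^\top(x-y) \ge 0$; adding to this the monotonicity inequality $\big(F(x)-F(y)\big)^\top(x-y) \ge 0$ from (A2) gives $F(x)^\top(x-y) \ge 0$, that is, $F(x)^\top(y-x) \le 0$. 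Substituting this back gives $G(x)^\top(y-x) \ge -\varepsilon_{\text{sub}}\tau$, i.e. \eqref{eq: problem inexact optimality} with $\varepsilon_{\text{up}} = \varepsilon_{\text{sub}}\tau$, hence with any larger value.

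The only genuinely non-mechanical step is this sign argument, $F(x)^\top(y-x) \le 0$ for $y \in \text{SOL}(F,Y)$: it is what guarantees that the Tikhonov term is controlled in the favourable direction when passing from the subproblem to the upper level, and it is precisely the reason plain monotonicity of the lower-level map $F$ (rather than a stronger property such as co-coercivity) is enough here. Everything else is a rearrangement together with a Cauchy--Schwarz estimate against the compactness constants $H$ and $D$.
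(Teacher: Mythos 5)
Your proposal is correct and follows essentially the same route as the paper: the feasibility bound is the same rearrangement plus a Cauchy--Schwarz estimate against $H$ and $D$, and your key sign claim $F(x)^\top(y-x)\le 0$ for $y\in\text{SOL}(F,Y)$ is exactly the combination of monotonicity of $F$ with the lower-level VI inequality that the paper uses (there written as the two-step chain $\tau F(x)^\top(y-x)\le\tau F(y)^\top(y-x)\le 0$). No gaps.
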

\begin{proof}
 We have for all $y \in \text{SOL}(F,Y)$:
 \begin{align*}
 -\varepsilon_{\text{sub}} \tau & \leq \left[\tau F(x) + G(x)\right]^\top(y-x) \\
 & \leq \left[\tau F(y) + G(x)\right]^\top(y-x) \\
 & \leq G(x)^\top(y-x),
 \end{align*}
 where the first inequality is due to \eqref{eq: subproblem inexact optimality}, the second one comes from the monotonicity of $F$, and the last one is true because $x \in Y$ and then $F(y)^\top(x-y) \geq 0$. That is \eqref{eq: problem inexact optimality} is true.

 Moreover, we have for all $y \in Y$:
 \begin{align*}
 F(x)^\top (y-x) & = \Phi_\tau (x)^\top (y-x) - \frac{1}{\tau} G(x)^\top (y-x) \\
 & \geq -\varepsilon_{\text{sub}} - \frac{1}{\tau} H D,
 \end{align*}
 where the inequality is due to \eqref{eq: subproblem inexact optimality}. Therefore we get \eqref{eq: problem inexact feasibility}.
 \hfill $\square$
\end{proof}

\noindent
Proposition \ref{th: upper level optimality} suggests a way to solve, with a good degree of accuracy, the hierarchical problem \eqref{eq:pureh}. That is solving the VI subproblem \eqref{eq: subproblem inexact optimality} with a big value for $\tau$ and an $\varepsilon_{\text{sub}}$ sufficiently small in order to make $\varepsilon_{\text{sub}} \tau$ small enough.
Following this path, we propose a Projected Averaging Tikhonov Algorithm (PATA), see Algorithm \ref{al:algo}, to compute solutions of problem \eqref{eq:pureh}.

\medskip
{

\begin{algorithm}[H]
\KwData{$\{\bar \varepsilon^i\} \subseteq \mathbb R_+, \, \{\bar \tau^{i} \} \subseteq \mathbb R_{++}$, $\gamma^k \in (0,1]$, $w^0 = z^0 = y^0 \in Y$, $i, \, l \leftarrow 0$;}
\medskip
\For{$k=0,1,\ldots$}{
\nlset{(S.1)}{$\varepsilon^k = \bar \varepsilon^i$, $\tau^k = \bar \tau^{i}$;}

\nlset{(S.2)}{$y^{k+1} = P_Y(y^k - \gamma^{k-l} \Phi_{\tau^k}(y^k))$\label{S.1};}

\nlset{(S.3)}{$\displaystyle z^{k+1} = \frac{\sum_{j=l}^{k+1} \gamma^{j-l} y^j}{\sum_{j=l}^{k+1} \gamma^{j-l}}$\label{S.2};}

{\nlset{(S.4)} \If{$\min_{y \in Y} \Phi_{\tau^k}(z^{k+1})^\top (y - z^{k+1}) \geq -\varepsilon^k$} {$w^{i+1} = z^{k+1}$, $i=i+1$, $l=k+1$;} \label{S.3}}}{}

\caption{\label{al:algo} Projected Averaging Tikhonov Algorithm (PATA)}
\end{algorithm}}
\medskip
\noindent
Some comments about PATA are in order.
Index $i$ denotes the outer iterations that occur when the condition in step \ref{S.3} is verified, and they correspond to solutions $w^{i+1}$ of the VI subproblems \eqref{eq: subproblem inexact optimality} with $\varepsilon_{\text{sub}} = \bar \varepsilon^i$ and $\tau = \bar \tau^i$. The sequence $\{y^k\}$ is obtained by making classical projection steps with stepsizes $\gamma^k$, see step \ref{S.1}. The sequence $\{z^k\}$ consists of the inner iterations needed to compute a solution of the VI subproblem \eqref{eq: subproblem inexact optimality}, and it is obtained by performing a weighted average on the points $y^j$, see step \ref{S.2}.
Index $l$ is included in order to let the sequence of the stepsizes $\{\gamma^k\}$ restart at every outer iteration and to consider only the points $y^j$ belonging to the current subproblem to compute $z^{k+1}$.

We remark that the condition in step \ref{S.3} only requires the solution of an optimization problem with a linear objective function over the convex set $Y$ (see the discussion about inexact solutions of VIs below relation \eqref{eq:stopping general}). In section \ref{sec: numerical} we give a practical implementation of PATA.

In the following section we show that Proposition \ref{th: upper level optimality} can be used to prove that PATA effectively computes solutions of problem \eqref{eq:pureh}.

\section{Main convergence properties}\label{sec:conv}

First of all we deal with convergence properties of PATA.

\begin{theorem}\label{th:conv2}
Assume conditions (A1)-(A3) to hold, and let conditions
\begin{equation}\label{eq: conditions for convergence}
\sum_{k=0}^\infty \gamma^k = \infty, \enspace \frac{\sum_{k=0}^\infty (\gamma^k)^2}{\sum_{k=0}^\infty \gamma^k} = 0, \enspace
\frac{1}{\bar \tau^{i}} \downarrow 0, \enspace \sum_{i=0}^\infty \frac{1}{\bar \tau^{i}} = \infty, \enspace \bar \varepsilon^{i} = \frac{c}{(\bar \tau^{i})^\beta},
\end{equation}
hold with $\beta > 1$ and $c > 0$. Every limit point of the sequence $\{w^i\}$ generated by PATA is a solution of problem \eqref{eq:pureh}. 
\end{theorem}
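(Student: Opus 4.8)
The plan is to combine the inexact-optimality bridge of Proposition~\ref{th: upper level optimality} with a two-layer convergence argument: an \emph{inner} estimate showing that the averaged sequence $\{z^{k+1}\}$ produced between two consecutive outer updates does drive the subproblem gap $\min_{y\in Y}\Phi_{\tau^k}(z^{k+1})^\top(y-z^{k+1})$ above $-\varepsilon^k$ in finitely many steps (so that infinitely many outer iterations $w^i$ are generated and $i\to\infty$), and an \emph{outer} estimate showing that any limit point of $\{w^i\}$ is feasible and optimal for \eqref{eq:pureh}. First I would fix an outer stage $i$, write $l$ for the iteration at which it started, and run the standard Euclidean-projection analysis for the monotone, $L_\Phi$-Lipschitz operator $\Phi_{\tau^k}$ on the compact set $Y$: for any fixed $y\in Y$, expanding $\|y^{j+1}-y\|_2^2\le\|y^j-y\|_2^2-2\gamma^{j-l}\Phi_{\tau^k}(y^j)^\top(y^j-y)+(\gamma^{j-l})^2\|\Phi_{\tau^k}(y^j)\|_2^2$ and summing over $j=l,\dots,k$, then invoking monotonicity to replace $\Phi_{\tau^k}(y^j)^\top(y^j-y)$ by $\Phi_{\tau^k}(y)^\top(y^j-y)$, and finally using the Jensen-type identity from the definition of $z^{k+1}$ in (S.3), yields a bound of the shape
\begin{equation*}
\Phi_{\tau^k}(z^{k+1})^\top(z^{k+1}-y)\;\le\;\frac{D^2+(L_F+R+\tfrac1{\bar\tau^i}H)^2\sum_{j\ge 0}(\gamma^j)^2}{2\sum_{j=l}^{k}\gamma^{j-l}},
\end{equation*}
valid uniformly in $y\in Y$ (here I bound $\|\Phi_{\tau^k}(y^j)\|_2\le R+\tfrac1{\bar\tau^i}H$ using the constants $R,H,D$ defined in the paper, and I use monotonicity of $\Phi_{\tau^k}$ to move the operator to the fixed point $y$). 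The stepsize conditions $\sum\gamma^k=\infty$ and $\sum(\gamma^k)^2/\sum\gamma^k=0$ make the right-hand side tend to $0$ as $k\to\infty$ within the stage, so the numerator/denominator ratio eventually falls below $\varepsilon^k=\bar\varepsilon^i$; hence stage $i$ terminates finitely, $l$ is reset, and — crucially — the constant $\bar\tau^i$ governing the per-stage bound is frozen during the stage, so the same argument reruns at stage $i+1$. This shows $i\to\infty$ and that $w^{i+1}=z^{k+1}$ is a genuine $\bar\varepsilon^i$-approximate solution of VI$(\Phi_{\bar\tau^i},Y)$, i.e. it satisfies \eqref{eq: subproblem inexact optimality} with $\tau=\bar\tau^i$, $\varepsilon_{\text{sub}}=\bar\varepsilon^i$.

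Next I would feed this into Proposition~\ref{th: upper level optimality}: for every $i$, the point $w^{i+1}$ satisfies $F(w^{i+1})^\top(y-w^{i+1})\ge-\big(\bar\varepsilon^i+\tfrac1{\bar\tau^i}HD\big)$ for all $y\in Y$, and $G(w^{i+1})^\top(y-w^{i+1})\ge-\bar\varepsilon^i\bar\tau^i$ for all $y\in\text{SOL}(F,Y)$. Now let $\bar w$ be a limit point of $\{w^{i+1}\}$ along a subsequence; by compactness of $Y$ (A3) such a limit point exists, and $\bar w\in Y$. Passing to the limit in the lower-level inequality, since $\bar\varepsilon^i+\tfrac1{\bar\tau^i}HD\to0$ (because $\tfrac1{\bar\tau^i}\downarrow0$ and $\bar\varepsilon^i=c/(\bar\tau^i)^\beta\to0$), continuity of $F$ gives $F(\bar w)^\top(y-\bar w)\ge0$ for all $y\in Y$, i.e. $\bar w\in\text{SOL}(F,Y)$: feasibility for \eqref{eq:pureh}. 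For the upper-level inequality note $\bar\varepsilon^i\bar\tau^i=c/(\bar\tau^i)^{\beta-1}\to0$ because $\beta>1$ and $\bar\tau^i\to\infty$; passing to the limit along the subsequence, for every \emph{fixed} $y\in\text{SOL}(F,Y)$ we get $G(\bar w)^\top(y-\bar w)\ge0$, and since $y$ was arbitrary in $\text{SOL}(F,Y)$ this is exactly \eqref{eq:pureh}. The conditions $\sum_i\tfrac1{\bar\tau^i}=\infty$ is not needed for this subsequential statement per se but is the natural ``$\tau$ grows slowly enough'' companion to the stepsize summability and would be used if one wanted the inner loop estimates to be uniform; I would keep the argument at the level of ``fix the stage, the bound depends only on $\bar\tau^i$'' to avoid needing it explicitly here.

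The main obstacle is the first, inner-loop part: one must be careful that the averaging in (S.3) is over exactly the points $y^l,\dots,y^{k+1}$ of the current stage with the \emph{restarted} stepsize weights $\gamma^{j-l}$, so that the telescoping of $\|y^{j+1}-y\|_2^2$ is clean and the resulting gap bound involves $\sum_{j=l}^{k}\gamma^{j-l}=\sum_{j=0}^{k-l}\gamma^j\to\infty$ in the denominator and $\sum_{j\ge0}(\gamma^j)^2<\infty$-controlled quantity (or, more precisely, $\big(\sum_{j=0}^{k-l}(\gamma^j)^2\big)/\big(\sum_{j=0}^{k-l}\gamma^j\big)\to0$) in the numerator — this is where the second condition in \eqref{eq: conditions for convergence} enters and where the restart index $l$ is essential. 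A secondary subtlety is that $\Phi_{\tau^k}$ changes from stage to stage, so the Lipschitz/boundedness constants must be taken uniform over all stages; this is harmless since $\tfrac1{\bar\tau^i}\le\tfrac1{\bar\tau^0}$, giving a single $L_\Phi$ and a single bound $R+\tfrac1{\bar\tau^0}H$ for $\|\Phi_{\tau^k}(\cdot)\|_2$ on $Y$, valid for all stages. Everything else — existence of limit points, continuity passages, arbitrariness of $y$ — is routine given (A1)–(A3).
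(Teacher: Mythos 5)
Your overall two-layer strategy (inner: show each stage terminates finitely so that $i\to\infty$ and each $w^{i+1}$ satisfies \eqref{eq: subproblem inexact optimality} with $\varepsilon_{\text{sub}}=\bar\varepsilon^i$; outer: feed $w^{i+1}$ into Proposition~\ref{th: upper level optimality} and pass to the limit using $\beta>1$ and $1/\bar\tau^i\downarrow 0$) is exactly the paper's, and your outer argument is correct. The inner argument, however, contains a genuine gap. The telescoping--monotonicity--averaging computation you describe bounds the \emph{dual} (Minty) gap: since monotonicity replaces $\Phi_{\tau^k}(y^j)^\top(y^j-y)$ by the \emph{smaller} quantity $\Phi_{\tau^k}(y)^\top(y^j-y)$, what you obtain after averaging is $\Phi_{\tau^k}(y)^\top(z^{k+1}-y)\le \bigl(D^2+\sum_j(\gamma^{j-l})^2\|\Phi_{\tau^k}(y^j)\|_2^2\bigr)/\bigl(2\sum_j\gamma^{j-l}\bigr)$ for all $y\in Y$, not the bound you display on the \emph{primal} gap $\Phi_{\tau^k}(z^{k+1})^\top(z^{k+1}-y)$. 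Monotonicity gives $\Phi_{\tau^k}(z^{k+1})^\top(z^{k+1}-y)\ge \Phi_{\tau^k}(y)^\top(z^{k+1}-y)$, i.e.\ the inequality points the wrong way for the conversion you need, and the test in step \ref{S.3} is precisely a primal-gap test. So your claim that the right-hand side falling below $\varepsilon^k$ forces the stage to terminate is not yet proved.

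The paper closes this gap non-quantitatively, by contradiction: if a stage never terminated, the dual-gap bound \eqref{eq:intermaver 2} together with $\sum_j\gamma^j=\infty$ and $\sum_j(\gamma^j)^2/\sum_j\gamma^j=0$ would force every limit point $z$ of $\{z^k\}$ (which exists by (A3)) to satisfy the Minty inequality $\Phi_{\bar\tau}(v)^\top(v-z)\ge 0$ for all $v\in Y$, hence to solve VI$(\Phi_{\bar\tau},Y)$ by \cite[Theorem 2.3.5]{FacchPangBk}; continuity of the primal gap function $z\mapsto\min_{y\in Y}\Phi_{\bar\tau}(z)^\top(y-z)$, which vanishes on the solution set, then contradicts the test being violated by at least $\varepsilon^{\bar k}=c/(\bar\tau^{\bar k})^\beta>0$ at every iteration. (A quantitative primal-from-dual conversion via the natural residual, as in the proof of Theorem~\ref{th:ftermination} and Proposition~\ref{th: inexact feasibility relations}, is also possible, but it uses the normalization $L_\Phi<1$, which Theorem~\ref{th:conv2} does not assume.) The rest of your proposal --- the restart bookkeeping with index $l$, the uniform bounds over stages, and the limit passages for $F$ and $G$ --- is sound.
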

\begin{proof}
 First of all we show that $i \to \infty$.
 Assume by contradiction that this is not true, therefore 
 there exists an index $\bar k$ such that the condition in step \ref{S.3} is violated for every $k \geq \bar k$, and either $\bar k = 0$ or the condition in step \ref{S.3} is satisfied at the iteration $\bar k - 1$. We denote $\bar \tau = \tau^{\bar k}$, and observe that $\tau^k = \bar \tau$ for every $k \ge \bar k$.

For every $j \in [\bar k, k]$, and for any $v \in Y$, we have
\[
\begin{array}{rcl}
	\|y^{j+1} - v\|_2^2 & = & \|P_Y(y^j - \gamma^{j-\bar k} \Phi_{\bar \tau}(y^j)) - v\|_2^2\\[5pt]
	& \le & \|y^j - \gamma^{j-\bar k} \Phi_{\bar \tau}(y^j) - v\|_2^2\\[5pt]
	& = & \|y^j - v\|_2^2 + (\gamma^{j-\bar k})^2 \|\Phi_{\bar \tau}(y^j)\|_2^2 - 2 \gamma^{j-\bar k} \Phi_{\bar \tau}(y^j)^\top (y^j - v),
\end{array}
\]
and, in turn,
\[
\Phi_{\bar \tau}(y^j)^\top (v - y^j) \ge \frac{\|y^{j+1} - v\|_2^2 - \|y^j - v\|_2^2}{2 \gamma^{j-\bar k}} - \frac{\gamma^{j-\bar k}}{2} \|\Phi_{\bar \tau}(y^j)\|_2^2. 
\]
Summing, we get
{\footnotesize
\begin{equation}\label{eq:twocases 2}
\begin{array}{rcl}
\frac{\displaystyle\sum_{j=\bar k}^{k} \gamma^{j-\bar k} \Phi_{\bar \tau}(y^j)^\top (v - y^j)}{\displaystyle\sum_{j=\bar k}^{k} \gamma^{j-\bar k}} & \ge & \frac{\displaystyle\sum_{j=\bar k}^{k} \left(\|y^{j+1} - v\|_2^2 - \|y^j - v\|_2^2 - (\gamma^{j-\bar k})^2 \|\Phi_{\bar \tau}(y^j)\|_2^2\right)}{2\displaystyle\sum_{j=\bar k}^{k} \gamma^{j-\bar k}} \\[5pt]
& = & \frac{\displaystyle\left(\|y^{k+1} - v\|_2^2 - \|y^{\bar k} - v\|_2^2 - \displaystyle\sum_{j=\bar k}^{k} (\gamma^{j-\bar k})^2 \|\Phi_{\bar \tau}(y^j)\|^2_2\right)}{2\displaystyle\sum_{j=\bar k}^{k} \gamma^{j-\bar k}} \\[5pt]
& \geq & - \frac{\displaystyle \left( \|y^{\bar k} - v\|_2^2 + \displaystyle\sum_{j=\bar k}^{k} (\gamma^{j-\bar k})^2 \|\Phi_{\bar \tau}(y^j)\|^2_2\right)}{2\displaystyle\sum_{j=\bar k}^{k} \gamma^{j-\bar k}},	
\end{array}
\end{equation}}
which implies  
\begin{equation}\label{eq:intermaver 2}
\begin{array}{rcl}
	 \Phi_{\bar \tau}(v)^\top (v - z^k) & = & \frac{1}{\displaystyle\sum_{j=\bar k}^{k} \gamma^{j-\bar k}} \displaystyle\sum_{j=\bar k}^{k} \gamma^{j-\bar k} \Phi_{\bar \tau}(v)^\top (v - y^j)\\[5pt]
	 & \ge & -\frac{\displaystyle\left(\|y^{\bar k} - v\|_2^2 + \sum_{j=\bar k}^{k} (\gamma^{j-\bar k})^2 \|\Phi_{\bar \tau}(y^j)\|^2_2\right)}{2\displaystyle\sum_{j=\bar k}^{k} \gamma^{j-\bar k}} \\[5pt]
	& & + \frac{1}{\displaystyle\sum_{j=\bar k}^{k} \gamma^{j-\bar k}} \displaystyle\sum_{j=\bar k}^{k} \gamma^{j-\bar k} (\Phi_{\bar \tau}(v) - \Phi_{\bar \tau}(y^j))^\top (v - y^j)\\[5pt]
	& \ge & -\frac{\displaystyle\left(\|y^{\bar k} - v\|_2^2 + \sum_{j=\bar k}^{k} (\gamma^{j-\bar k})^2 \|\Phi_{\bar \tau}(y^j)\|^2_2\right)}{2\displaystyle\sum_{j=\bar k}^{k} \gamma^{j-\bar k}},
\end{array}
\end{equation}
where the last inequality is due to the monotonicity of $\Phi_{\bar \tau}$. Denoting by $z \in Y$ any limit point of the sequence $\{z^k\}$, taking the limit in the latter relation and subsequencing, the following inequality holds:
\[
{\footnotesize{
\begin{array}{rcl}
	 \Phi_{\bar \tau}(v)^\top (v - z) & \ge & -\frac{\displaystyle\left(\|y^{\bar k} - v\|_2^2 + \sum_{j=\bar k}^{\infty} (\gamma^{j-\bar k})^2 \|\Phi_{\bar \tau}(y^j)\|^2_2\right)}{2\displaystyle\sum_{j=\bar k}^{\infty} \gamma^{j-\bar k}} = 0,
\end{array}}}
\]
because $\sum_{j=\bar k}^{\infty} \gamma^{j-\bar k} = +\infty$ and $\left(\sum_{j=\bar k}^{\infty} (\gamma^{j-\bar k})^2\right)/\left(\sum_{j=\bar k}^{\infty} \gamma^{j-\bar k}\right) = 0$, and then $z$ is a solution of the dual problem
\[
\Phi_{\bar \tau}(v)^\top (v - z) \ge 0, \enspace \forall v \in Y.
\] 
Hence, the sequence $\{z^k\}$ converges to a solution of VI$(Y, \Phi_{\bar \tau})$, see e.g. \cite[Theorem 2.3.5]{FacchPangBk}, in contradiction to $\min_{y \in Y} \Phi_{\bar \tau}(z^{k+1})^\top (y - z^{k+1}) < - \varepsilon^k = - \varepsilon^{\bar k}$ for every $k \ge \bar k$.

Therefore the algorithm produces an infinite sequence $\{w^i\}$ such that $w^{i+1} \in Y$ and
$$
 \Phi_{\bar \tau^i} (w^{i+1})^\top(y-w^{i+1}) \geq -\frac{c}{(\bar \tau^i)^\beta}, \quad \forall \, y \in Y,
$$
that is \eqref{eq: subproblem inexact optimality} holds at $w^{i+1}$ with $\varepsilon_{\text{sub}} = \frac{c}{(\bar \tau^i)^\beta}$.
By Proposition \ref{th: upper level optimality}, specifically from \eqref{eq: problem inexact optimality} and \eqref{eq: problem inexact feasibility}, we obtain
\begin{equation}\label{optimality G}
G(w^{i+1})^\top (y-w^{i+1}) \geq -\frac{c}{(\bar \tau^i)^{\beta-1}}, \quad \forall y \in \text{SOL}(F,Y),
\end{equation}
and
\begin{equation}\label{optimality F}
F(w^{i+1})^\top (y-w^{i+1}) \geq -\frac{c}{(\bar \tau^i)^{\beta}} - \frac{1}{\bar \tau^i} H D, \quad \forall y \in Y.
\end{equation}
Taking the limit $i \to \infty$, and recalling that $G$ and $F$ are continuous and $\beta > 1$, we get the desired convergence property.
\hfill $\square$

\end{proof}

\noindent 
Conditions \eqref{eq: conditions for convergence} for the sequence of stepsizes $\{\gamma^k\}$ are satisfied, e.g., if we choose
\begin{equation*}
\gamma^k = \min\left\{1,\frac{a}{k^\alpha}\right\},
\end{equation*}
with $a > 0$ and $\alpha \in (0,1]$, see Proposition \ref{th: sequence complies with theorem} in the Appendix. \textcolor{black}{Another possible choice of step-size rule satisfying conditions \eqref{eq: conditions for convergence}, as shown in \cite{facchinei2015parallel}, is
\begin{equation*}
    \gamma^k=\gamma^{k-1}(1-\theta \gamma^{k-1}), \quad k=1, \hdots
\end{equation*}
where $\theta \in (0,1)$ is a given constant, provided that $\gamma^0 \in (0,1]$. Both the above step-size rules satisfy conditions \eqref{eq: conditions for convergence}, needed for Theorem \ref{th:conv2} to be valid. }

\textcolor{black}{We remark that, even if we require assumptions that are less stringent with respect to related literature, we still obtain the same type of convergence as in related literature, namely \textit{subsequential} convergence to a solution of problem \eqref{eq:pureh}. Note that, thanks to assumption (A3), at least a limit point of sequence $\{w^i\}$ generated by PATA exists. As it is common practice when using an iterative algorithm like PATA, referring to \eqref{optimality G} and \eqref{optimality F}, $w^{i+1}$ can be considered an approximate solution of problem \eqref{eq:pureh} as soon as $\frac{c}{(\bar \tau^i)^{\beta-1}}$ and $\left(\frac{c}{(\bar \tau^i)^\beta} + \frac{1}{\bar \tau^i} H D \right)$ are small enough. Clearly, if the upper-level map $G$ is strongly monotone on $Y$, the whole sequence $\{w^i\}$ converges to the unique solution of problem \eqref{eq:pureh}.  }

\textcolor{black}{We consider the so-called natural residual map for VI$(\Psi, K)$ (with $\Psi$ continuous and $K$ convex and compact) 
\begin{equation}\label{eq:meritU}
U(x) \triangleq \|P_K(x - \Psi(x)) - x\|.
\end{equation}
As recalled in \cite{lampariello2020explicit}, the function $U$ is continuous and nonnegative. Moreover, $U(x) = 0$ if and only if $x \in \text{SOL}(\Psi, K)$. Specifically, classes of problems exist for which the value $U(x)$ also gives an actual upper-bound to the distance between $x$ and $\text{SOL}(\Psi, K)$, see \cite{lampariello2020explicit} and the references therein.
Therefore, the following condition
\begin{equation}\label{eq: problem inexact feasibility natural gen}
 U(x) \leq \widehat \varepsilon,
\end{equation}
with $\widehat \varepsilon \geq 0$, is alternative to \eqref{eq: inexact optimality}.
However, relations \eqref{eq: inexact optimality} and \eqref{eq: problem inexact feasibility natural gen} turn out to be related to each other: we show in Appendix (Proposition \ref{th: inexact feasibility relations}) that if $x$ satisfies \eqref{eq: inexact optimality}, then \eqref{eq: problem inexact feasibility natural gen} holds with $\widehat \varepsilon \geq \sqrt{\varepsilon}$. Vice versa, condition \eqref{eq: problem inexact feasibility natural gen} implies \eqref{eq: inexact optimality} with $\varepsilon \geq (\Omega + \Xi) \widehat \varepsilon$, where $\Omega \triangleq \max_{v, y \in K}\|v - y\|_2$ and $\Xi \triangleq \max_{y \in K} \|\Psi(y)\|_2$. }

In order to deal with the convergence rate analysis of our method, we consider the natural residual map for the lower-level VI$(F, Y)$ 
\begin{equation}\label{eq:meritV}
V(x) \triangleq \|P_Y(x - F(x)) - x\|.
\end{equation}
Clearly, the following condition
\begin{equation}\label{eq: problem inexact feasibility natural}
 V(x) \leq \widehat \varepsilon_{\text{low}},
\end{equation}
with $\widehat \varepsilon_{\text{low}} \geq 0$, is alternative to \eqref{eq: problem inexact feasibility} to take care of the feasibility of problem \eqref{eq:pureh}.

In this context, we underline that the convergence rate we establish is intended to give an upper bound to the number of iterations needed to drive both the upper-level error $\varepsilon_{\text{up}}$, given in \eqref{eq: problem inexact optimality}, and the lower-level error $\widehat \varepsilon_{\text{low}}$, given in \eqref{eq: problem inexact feasibility natural}, under some prescribed tolerances $\delta_{\text{up}}$ and $\widehat \delta_{\text{low}}$, respectively.

\begin{theorem}\label{th:ftermination}
Assume conditions (A1)-(A3) to hold and, without loss of generality, assume $L_\Phi < 1$. Consider PATA. Given some precisions $\delta_{\text{up}}, \widehat \delta_{\text{low}} \in (0,1)$, set $\gamma^k = \min\left\{1,\frac{1}{2 k^\frac{1}{2}}\right\}$, $\bar \tau^i = \max\{1,i\}$, and $\bar \varepsilon^i = \frac{1}{(\bar\tau^{i})^2}$.
Let us define the quantity
$$
 I_{\max} \triangleq \left\lceil \max\left\{\frac{1}{\delta_{\text{up}}}, \frac{H+1}{\widehat \delta_{\text{low}}} \right\} \right\rceil.
$$
Then, the upper-level approximate problem \eqref{eq: problem inexact optimality} is solved for $x = z^{k+1}$ with $\varepsilon_{\text{up}} \leq \delta_{\text{up}}$ and the lower-level approximate problem \eqref{eq: problem inexact feasibility natural} is solved for $x = z^{k+1}$ with $\widehat \varepsilon_{\text{low}} \leq \widehat \delta_{\text{low}}$ and the condition in step \ref{S.3} is satisfied in at most
\[
\sigma \triangleq I_{\max} \left\lceil \max\left\{ I_{\max}^8 \frac{(D+R)^4}{(1-L_\Phi)^2} C_1, I_{\max}^{\frac{8}{1-2\eta}} \frac{(D+R)^{\frac{4}{1-2\eta}}}{(1-L_\Phi)^{\frac{2}{1-2\eta}}} C_{2,\eta} \right\} \right\rceil,
\]
iterations $k$, where $\eta > 0$ is a small number, and 
\begin{equation}\label{eq: constants C1 and C2}
 C_1 \triangleq \left( D^2 + \frac{5}{4} (R+H)^2 \right)^2, \quad C_{2,\eta} \triangleq \left( \frac{(R+H)^2}{(4\eta)} \right)^{\frac{2}{1-2\eta}}.
\end{equation}
\end{theorem}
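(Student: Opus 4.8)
The plan is to track, for a fixed outer iteration $i$, how fast the inner iterates $\{z^{k+1}\}$ drive the subproblem residual $\min_{y \in Y}\Phi_{\bar\tau^i}(z^{k+1})^\top(y - z^{k+1})$ below the threshold $-\bar\varepsilon^i = -1/(\bar\tau^i)^2$, and then multiply through by the number of outer iterations $I_{\max}$. First I would revisit the chain of inequalities \eqref{eq:twocases 2}–\eqref{eq:intermaver 2} from the proof of Theorem~\ref{th:conv2}, but now in a quantitative form: with $l$ the index at which the current outer iteration started, for any $v \in Y$ one has
\[
\Phi_{\bar\tau^i}(v)^\top(v - z^{k+1}) \;\ge\; -\frac{\|y^l - v\|_2^2 + \sum_{j=l}^{k} (\gamma^{j-l})^2 \|\Phi_{\bar\tau^i}(y^j)\|_2^2}{2\sum_{j=l}^{k}\gamma^{j-l}}.
\]
Bounding $\|y^l - v\|_2 \le D$, $\|\Phi_{\bar\tau^i}(y^j)\|_2 \le R + H$ (since $\bar\tau^i \ge 1$ and $\|F\| \le R$, $\|G\| \le H$ on $Y$), and using the monotonicity of $\Phi_{\bar\tau^i}$ to pass from $\Phi_{\bar\tau^i}(v)$ to $\Phi_{\bar\tau^i}(z^{k+1})$ exactly as in \eqref{eq:intermaver 2}, I get
\[
\min_{v \in Y}\Phi_{\bar\tau^i}(z^{k+1})^\top(v - z^{k+1}) \;\ge\; -\frac{D^2 + (R+H)^2\sum_{j=l}^{k}(\gamma^{j-l})^2}{2\sum_{j=l}^{k}\gamma^{j-l}}.
\]
So it suffices to find the smallest $m = k - l$ such that the right-hand side is $\ge -\bar\varepsilon^i = -1/(\bar\tau^i)^2 \ge -1$ (it would in fact be cleanest to demand it be $\ge -1/(\bar\tau^i)^2$, but since $\bar\tau^i \le I_{\max}$ one can absorb that factor into the final count).

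Next I would estimate the two sums with $\gamma^j = \min\{1, \tfrac{1}{2}j^{-1/2}\}$: $\sum_{j=0}^{m}\gamma^j = \Theta(m^{1/2})$ and $\sum_{j=0}^{m}(\gamma^j)^2 = \Theta(\log m)$. Thus the residual bound behaves like $-\big(D^2 + (R+H)^2 \log m\big)/\Theta(m^{1/2})$, and I need this $\ge -1/(\bar\tau^i)^2$, i.e. $m^{1/2} \gtrsim (\bar\tau^i)^2 (D^2 + (R+H)^2\log m)$. The logarithmic term is handled by the usual trick of splitting $\log m \le \frac{1}{2\eta} m^{\eta} \cdot (2\eta)$ for the small parameter $\eta > 0$ — more precisely $\log m \le m^\eta/(e\eta)$, which is where the constant $C_{2,\eta}$ in \eqref{eq: constants C1 and C2} and the exponent $\tfrac{1}{1-2\eta}$ enter. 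Separating the ``constant part'' $D^2 + \tfrac{5}{4}(R+H)^2$ (giving $C_1$ and the exponent $8$ after squaring twice and using $\bar\tau^i \le I_{\max}$) from the ``logarithmic part'' yields a bound on $m$ of the form $\max\{(\bar\tau^i)^8 (D+R)^4 C_1, (\bar\tau^i)^{8/(1-2\eta)} (D+R)^{4/(1-2\eta)} C_{2,\eta}\}$, up to the $(1-L_\Phi)$ factors — these appear because in the standard projected-gradient analysis one normally needs the stepsize small relative to $1/L_\Phi$; here $L_\Phi < 1$ is assumed WLOG and $\gamma^j \le \tfrac12$, so the descent inequality carries a clean $(1-L_\Phi)$ (or its powers after the same squaring) which I would carry through the algebra rather than hide. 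Bounding $\bar\tau^i \le I_{\max}$ for all $i \le I_{\max}$ turns $(\bar\tau^i)^8$ into $I_{\max}^8$, etc.

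Finally I would close the outer loop. By Proposition~\ref{th: upper level optimality}, once step~(S.4) fires at outer index $i$ we have a point satisfying \eqref{eq: subproblem inexact optimality} with $\varepsilon_{\text{sub}} = 1/(\bar\tau^i)^2$, hence \eqref{eq: problem inexact optimality} holds with $\varepsilon_{\text{up}} = 1/\bar\tau^i \le 1/i$, and \eqref{eq: problem inexact feasibility} with $\varepsilon_{\text{low}} \le 1/(\bar\tau^i)^2 + HD/\bar\tau^i \le (1+H)/i$ — wait, the statement uses the \emph{natural residual} $V$ in \eqref{eq: problem inexact feasibility natural}, so here I would instead invoke Proposition~\ref{th: inexact feasibility relations} (the $\varepsilon \leftrightarrow \widehat\varepsilon$ translation in the Appendix): an $\varepsilon_{\text{low}}$-solution in the sense \eqref{eq: problem inexact feasibility} gives $V(x) \le \sqrt{\varepsilon_{\text{low}}}$, and a short computation bounds this by $(H+1)/\bar\tau^i \le (H+1)/i$. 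Therefore taking $i = I_{\max} = \lceil \max\{1/\delta_{\text{up}}, (H+1)/\widehat\delta_{\text{low}}\}\rceil$ guarantees both $\varepsilon_{\text{up}} \le \delta_{\text{up}}$ and $\widehat\varepsilon_{\text{low}} \le \widehat\delta_{\text{low}}$; and since each of the first $I_{\max}$ outer iterations costs at most the per-outer-iteration bound derived above, the total inner-iteration count is $I_{\max}$ times that bound, which is exactly $\sigma$.

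\medskip

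\emph{Main obstacle.} The delicate part is the bookkeeping in the second paragraph: getting the exponents ($8$ versus $8/(1-2\eta)$) and the constants $C_1, C_{2,\eta}$ in \eqref{eq: constants C1 and C2} to come out exactly right, which requires being careful about (a) the precise constants in $\sum \gamma^j \ge c\, m^{1/2}$ and $\sum(\gamma^j)^2 \le c'\log m$ for the specific rule $\gamma^j = \min\{1,\tfrac12 j^{-1/2}\}$, (b) the $\log m \le m^\eta/(e\eta)$ bound and solving the resulting implicit inequality $m^{1/2} \gtrsim (\bar\tau^i)^2(D+R)^2 m^\eta$ for $m$, which forces the exponent $1/(1-2\eta)$ after dividing, and (c) squaring twice to clear the square roots, which is where the fourth powers $(D+R)^4$ and the squared constants arise. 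Everything else — the residual estimate, the reduction to counting $m$, and the outer-loop aggregation — is a direct quantitative rerun of arguments already present in Theorem~\ref{th:conv2} and Proposition~\ref{th: upper level optimality}.
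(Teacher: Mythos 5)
Your overall architecture (bound the per--outer-iteration inner work, multiply by $I_{\max}$, use Proposition~\ref{th: upper level optimality} to translate subproblem accuracy into upper/lower-level accuracy) matches the paper, and your use of Lemma~\ref{th:boundsforgamma} together with $\ln k \le k^\eta/\eta$ is exactly how $C_1$ and $C_{2,\eta}$ arise. However, there are two genuine gaps, both at the conversion steps where the proof actually lives.

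First, the central estimate you write,
\[
\min_{v \in Y}\Phi_{\bar\tau^i}(z^{k+1})^\top(v - z^{k+1}) \;\ge\; -\frac{D^2 + (R+H)^2\sum_{j}(\gamma^{j-l})^2}{2\sum_{j}\gamma^{j-l}},
\]
does not follow from the averaging argument plus monotonicity. Monotonicity gives $\Phi(v)^\top(v-z) \ge \Phi(z)^\top(v-z)$, so the chain \eqref{eq:twocases 2}--\eqref{eq:intermaver 2} only lower-bounds the \emph{dual} (Minty) gap $\Phi_{\bar\tau^i}(v)^\top(v - z^{k+1})$; passing from that to the \emph{primal} gap $\Phi_{\bar\tau^i}(z^{k+1})^\top(v - z^{k+1})$ — which is what step (S.4) tests and what Proposition~\ref{th: upper level optimality} consumes — goes in the wrong direction for monotonicity. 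The paper bridges this via the natural residual: \eqref{eq: dual to natural errors} shows $\|P_Y(z^k - \Phi_\tau(z^k)) - z^k\| \le \sqrt{\varepsilon_{\text{sub\_dual}}^k}/\sqrt{1-L_\Phi}$ (this projection argument, not a stepsize condition, is where $1-L_\Phi$ enters and where the assumption $L_\Phi<1$ is actually used), and then Proposition~\ref{th: inexact feasibility relations} converts the natural residual back to a primal error of size $(D+R)\sqrt{\varepsilon_{\text{sub\_dual}}^k}/\sqrt{1-L_\Phi} \sim k^{-1/4}$. The square root incurred here is precisely why the exponent is $8$ (and $(D+R)^4/(1-L_\Phi)^2$) rather than the $4$ your claimed $k^{-1/2}$ primal rate would yield; your ``squaring twice'' remark gestures at this but your main chain, taken literally, would prove a strictly better bound than the theorem states, which is a sign the step is unjustified.

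Second, your route to the lower-level bound $\widehat\varepsilon_{\text{low}} \le (H+1)/i$ does not work: applying Proposition~\ref{th: inexact feasibility relations} to an $\varepsilon_{\text{low}}$-solution of VI$(F,Y)$ with $\varepsilon_{\text{low}} \le 1/(\bar\tau^i)^2 + HD/\bar\tau^i$ gives $V(x) \le \sqrt{\varepsilon_{\text{low}}} = O(\sqrt{HD/i})$, which decays like $i^{-1/2}$ and cannot be bounded by $(H+1)/i$. The paper instead uses Lemma~\ref{th:stopmeasure} (inequality \eqref{eq:Vuppbound}), splitting $V(z) \le \frac{1}{\tau}\|G(z)\| + \|P_Y(z-\Phi_\tau(z))-z\|$ by nonexpansiveness of the projection, and bounding the second term by $\sqrt{\bar\varepsilon^i}=1/i$ via \eqref{eq: natural map vs dual}; this yields $(H+1)/i$ directly. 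You would need to replace your step with this decomposition for the stated $I_{\max}$ to suffice.
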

\begin{proof}
First of all we show that if $i \geq I_{\max}$, we reach the desired result. Specifically, about the upper-level problem \eqref{eq: problem inexact optimality}, we obtain
$$
\varepsilon_{\text{up}} = \bar \varepsilon^i \bar \tau^i = \frac{1}{i} \leq \delta_{\text{up}},
$$
where the first equality is due to Proposition \ref{th: upper level optimality}, and the last inequality follows from $i \geq I_{\max} \geq (\delta_{\text{up}})^{-1}$.

About the lower-level problem \eqref{eq: problem inexact feasibility natural}, preliminarily we observe that
\begin{equation}\label{eq: natural map vs dual}
\left\|P_Y(w^{i+1} - \Phi_{\bar \tau_i}(w^{i+1})) - w^{i+1}\right\|_2 \leq \sqrt{\bar \varepsilon^i},
\end{equation}
because $w^{i+1}$ satisfies the condition in step \ref{S.3} with $\bar \varepsilon^i$, see Proposition \ref{th: inexact feasibility relations}.
Moreover, we get
$$
\widehat \varepsilon_{\text{low}} \leq \frac{1}{\bar \tau^i} H + \sqrt{\bar \varepsilon^i} = \frac{H+1}{i} \leq \widehat \delta_{\text{low}},
$$
where the first inequality is due to \eqref{eq:Vuppbound} and \eqref{eq: natural map vs dual}, and the last inequality follows from $i \geq I_{\max} \geq (\widehat \delta_{\text{low}})^{-1} (H+1)$.

Now we consider the number of inner iterations needed to satisfy the condition in step \ref{S.3} with the smallest error $\bar \varepsilon^{I_{\max}} = I_{\max}^{-2}$ and for a $\tau > 0$. \textcolor{black}{Without loss of generality, in the following developments we will assume $\bar k=0$, meaning that we are simply computing the number of inner iterations.}
By \eqref{eq:intermaver 2}, the dual subproblem
\begin{equation}\label{eq: dual subproblem inexact optimality}
 \Phi_\tau (y)^\top(y-z^{k}) \geq -\varepsilon_{\text{sub\_dual}}^k, \quad \forall \, y \in Y,
\end{equation}
is solved for $\varepsilon_{\text{sub\_dual}}^k = \frac{D^2 + \sum_{j=0}^{k} (\gamma^{j})^2 (R+H)^2}{2\sum_{j=0}^{k} \gamma^{j}}$.
From Lemma \ref{th:boundsforgamma}, we obtain
$$
\sum_{j=0}^{k} \gamma^{j} \geq k^{\frac{1}{2}}, \quad \sum_{j=0}^{k} (\gamma^{j})^2 \leq \frac{5}{4} + \frac{1}{4} \ln(k) \leq \frac{5}{4} + \frac{1}{4 \eta} k^\eta, \quad \eta > 0,
$$
because
$$
\ln(k) = \int_1^k t^{-1} dt \leq \int_1^k t^{-1+\eta} dt \leq \frac{1}{\eta} k^\eta.
$$
Therefore
\begin{equation}\label{eq: dual error inner}
\begin{array}{rl}
\varepsilon_{\text{sub\_dual}}^k & \leq \frac{D^2 + \left( \frac{5}{4} + \frac{1}{4 \eta} k^\eta \right) (R+H)^2}{2 k^{\frac{1}{2}}} \\
 & = \frac{D^2 + \frac{5}{4} (R+H)^2}{2 k^{\frac{1}{2}}} + \frac{\frac{1}{4 \eta} k^\eta (R+H)^2}{2 k^{\frac{1}{2}}} \\
 & = \frac{C_1^{\frac{1}{2}}}{2 k^{\frac{1}{2}}} + \frac{C_{2,\eta}^{\frac{1-2\eta}{2}}}{2 k^{\frac{1-2\eta}{2}}} \\
 & \leq \max\left\{\frac{C_1^{\frac{1}{2}}}{k^{\frac{1}{2}}}, \frac{C_{2,\eta}^{\frac{1-2\eta}{2}}}{k^{\frac{1-2\eta}{2}}}\right\}.
\end{array}
\end{equation}
Now we show that 
\begin{equation}\label{eq: dual to natural errors}
\left\|P_Y(z^k - \Phi_\tau(z^k)) - z^k\right\|_2 \leq \frac{\sqrt{\varepsilon_{\text{sub\_dual}}^k}}{\sqrt{1-L_\Phi}}.
\end{equation}
In fact, taking $y = v^k = P_Y(z^k - \Phi_\tau(z^k)) \in Y$ in \eqref{eq: dual subproblem inexact optimality}, we have
\[
\begin{array}{rcl}
\varepsilon_{\text{sub\_dual}}^k & \ge & \Phi_\tau(v^k)^\top (z^k - v^k) \\[5pt]
& = & [z^k - v^k - [z^k - v^k - \Phi_\tau(v^k)]]^\top (z^k - v^k)\\[5pt]
& = &  \|z^k - v^k\|^2_2 - [z^k - \Phi_\tau(v^k) - v^k]^\top (z^k - v^k)\\[5pt]
& = & \|z^k - v^k\|^2_2 - [z^k - \Phi_\tau(z^k) - v^k]^\top (z^k - v^k)\\[5pt]
& & - [\Phi_\tau(z^k) - \Phi_\tau(v^k)]^\top (z^k - v^k)\\[5pt]
& \ge & (1 - L_\Phi) \|z^k - v^k\|^2_2,
\end{array}
\] 
where the last inequality follows from the Lipschitz continuity of $\Phi_\tau$ and the characteristic property of the projection. 

From Proposition \ref{th: inexact feasibility relations} and inequality \eqref{eq: dual to natural errors}, we obtain the following error for the subproblem
\begin{equation}\label{eq: primal subproblem inexact optimality}
 \Phi_\tau (z^k)^\top(y-z^{k}) \geq -\frac{D+R}{\sqrt{1-L_\Phi}}\sqrt{\varepsilon_{\text{sub\_dual}}^k}, \quad \forall \, y \in Y,
\end{equation}
and then, by \eqref{eq: dual error inner}, the desired accuracy for the subproblem is obtained when
$$
I_{\max}^{-2} = \bar \varepsilon^{I_{\max}} \geq \frac{D+R}{\sqrt{1-L_\Phi}} \max\left\{\frac{C_1^{\frac{1}{4}}}{k^{\frac{1}{4}}}, \frac{C_{2,\eta}^{\frac{1-2\eta}{4}}}{k^{\frac{1-2\eta}{4}}}\right\},
$$
that is
\begin{align*}
k \geq \max \left\{ I_{\max}^8 \frac{(D+R)^4}{(1-L_\Phi)^2} C_1, I_{\max}^\frac{8}{1-2\eta} \frac{(D+R)^\frac{4}{1-2\eta}}{(1-L_\Phi)^\frac{2}{1-2\eta}} C_{2,\eta} \right\}.
\end{align*}
The thesis follows by multiplying the number of outer iterations ($i \geq I_{\max}$) for the number of inner ones.
\hfill $\square$
\end{proof}

\noindent
In order to provide other complexity results for our method, we consider the following proposition, which is the dual counterpart of Proposition \ref{th: upper level optimality}, and provides a theoretical basis for Theorem \ref{th:ftermination easy}.

\begin{proposition}\label{th: upper level optimality dual}
Assume conditions (A1)-(A3) to hold, and let $x \in Y$ be an approximate solution of the dual VI subproblem:
\begin{equation}\label{eq: subproblem inexact optimality dual}
 \Phi_\tau (y)^\top(y-x) \geq -\varepsilon_{\text{sub\_dual}}, \quad \forall \, y \in Y,
\end{equation}
with $\tau > 0$ and $\varepsilon_{\text{sub\_dual}} \geq 0$.
It holds that $x$ turns out to be an approximate solution for the dual formulation of problem \eqref{eq:pureh}, that is
\begin{equation}\label{eq: problem inexact optimality dual}
G(y)^\top (y-x) \geq -\varepsilon_{\text{up\_dual}}, \quad \forall y \in \text{SOL}(F,Y),
\end{equation}
with $\varepsilon_{\text{up\_dual}} \geq \varepsilon_{\text{sub\_dual}} \tau$, and
\begin{equation}\label{eq: problem inexact feasibility dual}
F(y)^\top (y-x) \geq -\varepsilon_{\text{low\_dual}}, \quad \forall y \in Y,
\end{equation}
with $\varepsilon_{\text{low\_dual}} \geq \varepsilon_{\text{sub\_dual}} + \frac{1}{\tau} H D$.
\end{proposition}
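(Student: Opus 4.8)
The plan is to mirror, essentially verbatim, the argument used for Proposition~\ref{th: upper level optimality}, exploiting the fact that in the dual (Minty-type) formulation the operator is already evaluated at the free point $y$ rather than at $x$, which actually makes the derivation slightly lighter.

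First, for the upper-level estimate \eqref{eq: problem inexact optimality dual}, I would fix an arbitrary $y \in \text{SOL}(F,Y)$; since $\text{SOL}(F,Y) \subseteq Y$, this $y$ is admissible in \eqref{eq: subproblem inexact optimality dual}. Scaling that inequality by $\tau > 0$ and using $\tau\,\Phi_\tau(y) = \tau F(y) + G(y)$ yields $-\varepsilon_{\text{sub\_dual}}\,\tau \le \tau F(y)^\top (y-x) + G(y)^\top (y-x)$. Because $y$ solves VI$(F,Y)$ and $x \in Y$, we have $F(y)^\top (x-y) \ge 0$, hence $\tau F(y)^\top (y-x) \le 0$; dropping this nonpositive term gives $G(y)^\top (y-x) \ge -\varepsilon_{\text{sub\_dual}}\,\tau$, that is \eqref{eq: problem inexact optimality dual} with $\varepsilon_{\text{up\_dual}} = \varepsilon_{\text{sub\_dual}}\,\tau$ (and a fortiori with any larger value). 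Note that, unlike in Proposition~\ref{th: upper level optimality}, monotonicity of $F$ is not even invoked here, since no transfer from $F(x)$ to $F(y)$ is needed.

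Next, for the feasibility estimate \eqref{eq: problem inexact feasibility dual}, I would fix an arbitrary $y \in Y$ and decompose $F(y)^\top (y-x) = \Phi_\tau(y)^\top (y-x) - \tfrac{1}{\tau}\,G(y)^\top (y-x)$. The first term is bounded below by $-\varepsilon_{\text{sub\_dual}}$ by \eqref{eq: subproblem inexact optimality dual}, while the second is controlled via the Cauchy-Schwarz inequality together with the definitions of $H$ and $D$: $\tfrac{1}{\tau}\lvert G(y)^\top (y-x)\rvert \le \tfrac{1}{\tau}\,\|G(y)\|_2\,\|y-x\|_2 \le \tfrac{1}{\tau}\,HD$. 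Combining the two bounds gives $F(y)^\top (y-x) \ge -\varepsilon_{\text{sub\_dual}} - \tfrac{1}{\tau}\,HD$, which is \eqref{eq: problem inexact feasibility dual}.

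Since both parts reduce to one-line manipulations of the defining inequality \eqref{eq: subproblem inexact optimality dual}, I do not expect any genuine obstacle; the only point requiring a modicum of care is to keep the roles of $x$ and $y$ consistently swapped relative to the primal argument, in particular to handle correctly the sign of $F(y)^\top (y-x)$ when $y$ ranges over $\text{SOL}(F,Y)$.
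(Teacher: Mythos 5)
Your proposal is correct and follows essentially the same route as the paper's own proof: for \eqref{eq: problem inexact optimality dual} one scales \eqref{eq: subproblem inexact optimality dual} by $\tau$ and drops the term $\tau F(y)^\top(y-x)\le 0$ using $y\in\text{SOL}(F,Y)$ and $x\in Y$, and for \eqref{eq: problem inexact feasibility dual} one uses the decomposition $F(y)^\top(y-x)=\Phi_\tau(y)^\top(y-x)-\tfrac{1}{\tau}G(y)^\top(y-x)$ together with the bound $\tfrac{1}{\tau}HD$. Your observation that monotonicity of $F$ is not needed in the dual version is accurate and consistent with the paper's argument.
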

\begin{proof}
 We have for all $y \in \text{SOL}(F,Y)$:
 \begin{align*}
 -\varepsilon_{\text{sub\_dual}} \tau & \leq \left[\tau F(y) + G(y)\right]^\top(y-x) \\
 & \leq G(y)^\top(y-x),
 \end{align*}
 where the first inequality is due to \eqref{eq: subproblem inexact optimality dual} and the last one is true because $x \in Y$ and then $F(y)^\top(x-y) \geq 0$. That is \eqref{eq: problem inexact optimality dual} is true.

 Moreover, we have for all $y \in Y$:
 \begin{align*}
 F(y)^\top (y-x) & = \Phi_\tau (y)^\top (y-x) - \frac{1}{\tau} G(y)^\top (y-x) \\
 & \geq -\varepsilon_{\text{sub\_dual}} - \frac{1}{\tau} H D,
 \end{align*}
 where the inequality is due to \eqref{eq: subproblem inexact optimality dual}. Therefore we get \eqref{eq: problem inexact feasibility dual}.
 \hfill $\square$
\end{proof}

\noindent
The following theorem considers a simplified version of PATA. Specifically, the parameter $\tau$ is right away initialized to a value sufficiently large to get the prescribed optimality accuracy. Moreover, approximate optimality for problem \eqref{eq:pureh} is considered only in its dual version. That said, the complexity bound obtained is better than the one given by Theorem \ref{th:ftermination}.

\begin{theorem}\label{th:ftermination easy}
Assume conditions (A1)-(A3) to hold. Consider PATA. Given some precision $\delta \in (0,1)$, set $\gamma^k = \min\left\{1,\frac{1}{2 k^\frac{1}{2}}\right\}$, $\bar \tau^0 = \bar I_{\max}$, and $\bar \varepsilon^0 = 0$ where
$$
 \bar I_{\max} \triangleq \left\lceil \frac{H+1}{\delta} \right\rceil.
$$
Then, the upper-level approximate dual problem \eqref{eq: problem inexact optimality dual} is solved for $x = z^{k+1}$ with $\varepsilon_{\text{up\_dual}} \leq \delta$ and the lower-level approximate dual problem \eqref{eq: problem inexact feasibility dual}
is solved for $x = z^{k+1}$ with $\varepsilon_{\text{low\_dual}} \leq \delta$ in at most
\[
\bar \sigma \triangleq \left\lceil \max\left\{ \bar I_{\max}^4 C_1, \bar I_{\max}^{\frac{4}{1-2\eta}} C_{2,\eta} \right\} \right\rceil,
\]
iterations $k$, where $\eta > 0$ is a small number, and $C_1$ and $C_{2,\eta}$ are given in \eqref{eq: constants C1 and C2}.
\end{theorem}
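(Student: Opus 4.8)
The plan is to retrace the proof of Theorem~\ref{th:ftermination}, exploiting the two features that make this variant lighter: the Tikhonov parameter is frozen at $\tau=\bar\tau^{0}=\bar I_{\max}$ from the first iteration on, so the outer index $i$ and the offset $l$ never change; and approximate optimality for \eqref{eq:pureh} is only requested in its dual form \eqref{eq: problem inexact optimality dual}--\eqref{eq: problem inexact feasibility dual}. The first feature removes the outer loop, so there is nothing to multiply over at the end; the second removes the passage from a dual residual to a natural-residual/primal one, so the amplification factor $\tfrac{D+R}{\sqrt{1-L_\Phi}}$ that inflates $\sigma$ in Theorem~\ref{th:ftermination} never appears. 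These two simplifications are precisely the source of the improved bound $\bar\sigma$. Note that $\bar\varepsilon^{0}=0$ makes the stopping test in step~\ref{S.3} essentially never fire, but this is immaterial: I would reason directly about the averaged iterates $z^{k+1}$.

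The first step is to re-run, with $\bar k=0$ and $\bar\tau=\bar I_{\max}$, the chain of inequalities that led to \eqref{eq:intermaver 2}. Using $\|\Phi_{\bar I_{\max}}(y)\|_{2}\le R+\tfrac{1}{\bar I_{\max}}H\le R+H$ (because $\bar I_{\max}\ge 1$) together with $\|y^{0}-v\|_{2}\le D$, this shows that for every $k$ the averaged point $z^{k+1}$ solves the dual subproblem \eqref{eq: dual subproblem inexact optimality} with
\[
\varepsilon_{\text{sub\_dual}}^{k}=\frac{D^{2}+\sum_{j=0}^{k}(\gamma^{j})^{2}(R+H)^{2}}{2\sum_{j=0}^{k}\gamma^{j}}.
\]
Inserting the stepsize estimates of Lemma~\ref{th:boundsforgamma}, namely $\sum_{j=0}^{k}\gamma^{j}\ge k^{1/2}$ and $\sum_{j=0}^{k}(\gamma^{j})^{2}\le\tfrac54+\tfrac{1}{4\eta}k^{\eta}$, and repeating the split performed in \eqref{eq: dual error inner}, I obtain
\[
\varepsilon_{\text{sub\_dual}}^{k}\le\max\Bigl\{\,C_{1}^{1/2}k^{-1/2},\ C_{2,\eta}^{(1-2\eta)/2}k^{-(1-2\eta)/2}\,\Bigr\},
\]
with $C_{1},C_{2,\eta}$ as in \eqref{eq: constants C1 and C2}.

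Next I would invoke Proposition~\ref{th: upper level optimality dual} with $\tau=\bar I_{\max}$ and $\varepsilon_{\text{sub\_dual}}=\varepsilon_{\text{sub\_dual}}^{k}$: then $z^{k+1}$ satisfies \eqref{eq: problem inexact optimality dual} with $\varepsilon_{\text{up\_dual}}=\varepsilon_{\text{sub\_dual}}^{k}\,\bar I_{\max}$ and \eqref{eq: problem inexact feasibility dual} with $\varepsilon_{\text{low\_dual}}=\varepsilon_{\text{sub\_dual}}^{k}+\tfrac{1}{\bar I_{\max}}HD$. Since $\bar I_{\max}\ge (H+1)/\delta$ and $\delta\in(0,1)$, the feasibility offset is already below $\delta$, and a sufficient condition for both $\varepsilon_{\text{up\_dual}}\le\delta$ and $\varepsilon_{\text{low\_dual}}\le\delta$ is $\varepsilon_{\text{sub\_dual}}^{k}\le\bar I_{\max}^{-2}$. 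Combining this with the decay bound of the previous paragraph, the target accuracy holds as soon as $C_{1}^{1/2}k^{-1/2}\le\bar I_{\max}^{-2}$ and $C_{2,\eta}^{(1-2\eta)/2}k^{-(1-2\eta)/2}\le\bar I_{\max}^{-2}$, that is $k\ge\bar I_{\max}^{4}C_{1}$ and $k\ge\bar I_{\max}^{4/(1-2\eta)}C_{2,\eta}$, i.e. $k\ge\bar\sigma$; since $\tau$ is never updated, this single count of inner iterations is the whole iteration count, which gives the claim.

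Everything above is essentially a transcription of passages already present in the proofs of Theorems~\ref{th:conv2} and~\ref{th:ftermination} and of Proposition~\ref{th: upper level optimality dual}; the one place that needs genuine care is the constant bookkeeping in the last step — checking that the chosen $\bar I_{\max}$ is simultaneously large enough to push $\varepsilon_{\text{sub\_dual}}^{k}\bar I_{\max}$ and the offset $\tfrac{1}{\bar I_{\max}}HD$ below $\delta$ while keeping the exponents clean so that the two conditions on $k$ collapse exactly to $\bar\sigma$ (this amounts to elementary arithmetic with $\delta<1$ and the definition of $\bar I_{\max}$, absorbing the geometric constant $D$ where needed).
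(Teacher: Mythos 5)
Your proposal follows essentially the same route as the paper's proof: freeze $\tau=\bar I_{\max}$, re-run the averaging estimate with $\bar k=0$ to get the dual-gap bound $\varepsilon_{\text{sub\_dual}}^{k}\le\max\{C_1^{1/2}k^{-1/2},\,C_{2,\eta}^{(1-2\eta)/2}k^{-(1-2\eta)/2}\}$ via Lemma~\ref{th:boundsforgamma}, reduce the target to $\varepsilon_{\text{sub\_dual}}^{k}\le\bar I_{\max}^{-2}$, and solve for $k$; this is exactly the paper's argument and the exponent bookkeeping is right. The one place you diverge is the lower-level bound: you apply Proposition~\ref{th: upper level optimality dual} literally, which gives $\varepsilon_{\text{low\_dual}}\le\varepsilon_{\text{sub\_dual}}^{k}+\tfrac{HD}{\bar I_{\max}}\le\tfrac{1+HD}{\bar I_{\max}}$, and your claim that "the feasibility offset is already below $\delta$" then requires $1+HD\le H+1$, i.e.\ $D\le 1$, which fails for instance on the unit ball ($D=2$); the stated $\bar I_{\max}=\lceil (H+1)/\delta\rceil$ does not absorb $D$. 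The paper sidesteps this by instead bounding the lower-level error through the natural residual, $\tfrac{H}{\bar\tau^{0}}+\sqrt{\varepsilon_{\text{sub\_dual}}^{k}}=\tfrac{H+1}{\bar I_{\max}}$ (via \eqref{eq:Vuppbound}), which matches the stated constant, at the price of measuring feasibility by the natural map rather than by \eqref{eq: problem inexact feasibility dual} verbatim. So your argument is sound up to this constant: either enlarge $\bar I_{\max}$ to $\lceil(1+HD)/\delta\rceil$, or switch to the natural-residual bound as the paper does; everything else goes through unchanged.
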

\begin{proof}
First of all we denote with $\varepsilon_{\text{sub\_dual}}^k$ the error with which the current iteration solves the dual subproblem \eqref{eq: subproblem inexact optimality dual}.
Notice that as soon as $\varepsilon_{\text{sub\_dual}}^k \leq \bar I_{\max}^{-2}$, the desired accuracy for both the upper- and the lower-level dual problems is reached. In fact, as done in the proof of Theorem \ref{th:ftermination}, and considering Proposition \ref{th: upper level optimality dual}, we have:
\begin{equation*}
\varepsilon_{\text{up\_dual}} = \varepsilon_{\text{sub\_dual}}^k \bar \tau^0 = \frac{1}{\bar I_{\max}} \leq \delta
\end{equation*}
\textcolor{black}{where the first equality is due to Proposition \ref{th: upper level optimality}, and the last inequality follows from $i \geq \bar I_{\max} \geq \delta^{-1}$, and}
\begin{equation*}
\varepsilon_{\text{low\_dual}} \leq \frac{H}{\bar \tau^0} + \sqrt{\varepsilon_{\text{sub\_dual}}^k} = \frac{H+1}{\bar I_{\max}} \leq \delta.
\end{equation*}
\textcolor{black}{where the first inequality is due to \eqref{eq:Vuppbound} and \eqref{eq: natural map vs dual}, and the last inequality follows from $i \geq \bar I_{\max} \geq \delta^{-1} (H+1)$.}

\noindent
By \eqref{eq: dual error inner} we have
$$
\varepsilon_{\text{sub\_dual}}^k \leq \max\left\{\frac{C_1^{\frac{1}{2}}}{k^{\frac{1}{2}}}, \frac{C_{2,\eta}^{\frac{1-2\eta}{2}}}{k^{\frac{1-2\eta}{2}}}\right\}.
$$
Therefore, $\varepsilon_{\text{sub\_dual}}^k \leq \bar I_{\max}^{-2}$ is implied by
$$
\bar I_{\max}^{-2} \geq \max\left\{\frac{C_1^{\frac{1}{2}}}{k^{\frac{1}{2}}}, \frac{C_{2,\eta}^{\frac{1-2\eta}{2}}}{k^{\frac{1-2\eta}{2}}}\right\},
$$
that is
\begin{align*}
k \geq \max \left\{ I_{\max}^4 C_1, I_{\max}^\frac{4}{1-2\eta} C_{2,\eta} \right\},
\end{align*}
and the thesis follows.
\hfill $\square$
\end{proof}

\section{Numerical experiments}\label{sec: numerical}

We now tackle a practical example which is representative of the fact that, under assumptions (A1)-(A3), PATA produces the sequence of points $\{z^k\}$ that is (subsequentially) convergent to a solution of the hierarchical problem, while the sequence $\{y^k\}$ never approaches the solution set. Notice that $\{y^k\}$ coincides with the sequence produced by the Tikhonov methods proposed in \cite{lampariello2020explicit} when no proximal term is considered.

Let us examine the selection problem \eqref{eq:pureh}, where:
\[
G(y)=\begin{pmatrix}0 & -\frac{1}{2}\\ \frac{1}{2} & 0\end{pmatrix} \begin{pmatrix} y_1\\ y_2\end{pmatrix}, \, F(y) = \begin{pmatrix}0 & 1\\ -1 & 0\end{pmatrix} \begin{pmatrix} y_1\\ y_2\end{pmatrix}, \, Y = \mathbb B(0,1),
    \]
where $\mathbb B(0,1)$ denotes the unit ball.    
The unique feasible point and, thus, the unique solution of the problem is $z^* = (0,0)\trt$. The assumptions (A1)-(A3) are satisfied, but notice that $G$ does not satisfy convergence conditions of the Tikhonov-like methods proposed in \cite{facchinei2014vi,lampariello2020explicit} because it is not monotone plus.

The generic $k$th iteration of PATA, in this case, should read as reported below:
\[
y^{k+1} = P_Y(y^{k} - \gamma^k [F(y^{k}) + \frac{1}{\tau^k} G(y^{k})]),
\]
where we take, for example, but without loss of generality, $\tau^k = \tau \geq 1$ and $\gamma^k = \gamma > 0$. 
We remark that the unique exact solution of the VI subproblem \eqref{eq: subproblem inexact optimality} is the origin, and then every inexact solution, with a reasonably small error, cannot be far from it.
For every $k$ it holds that
\[
y^{k+1}= P_Y\left(\begin{pmatrix}
1 & \gamma(\frac{1}{2\tau}-1)\\ \gamma(1 - \frac{1}{2\tau}) & 1	
\end{pmatrix}
\begin{pmatrix} y_1^{k}\\ y_2^{k}\end{pmatrix}\right),\\[5pt]
\] 
hence $\|y^{k+1}\|_2 = \min \left\{1, \sqrt{1+\gamma^2(\frac{1}{2\tau}-1)^2}\ \|y^k\|_2 \right\}$.
Therefore we consider $\|y^0\|_2 = 1$ and get $\|y^{k}\|_2 = 1$ for every $k$, because $\sqrt{1+\gamma^2(\frac{1}{2\tau}-1)^2}\ > 1$.
Therefore, neither does the sequence $\{y^k\}$ produced by PATA lead to the unique solution $z^*$ of problem \eqref{eq:pureh}, nor does it approach the inexact solution set of the VI subproblem.

We now consider the sequence $\{z^k\}$ produced by PATA. In order to show that this sequence leads us to the solution of the hierarchical problem, we analyze a numerical implementation of the algorithm.
Some further considerations are in order before showing the actual implemented scheme.
\begin{itemize}
\item
A general rule for the update of the variable $z^{k}$
is given by the following relation:
\[
z^{k+1}= \frac{z^k \gamma^{\text{sum},l,k}+ \gamma^{k+1-l}y^{k+1}}{\gamma^{\text{sum},l,k} + \gamma^{k+1-l}},
   \]
where
 \[
\gamma^{\text{sum},l,k} \triangleq \sum_{j=l}^k \gamma^{j-l},
  \]
which gives us the expression of $z^{k+1}$ reported in Step \ref{S.2} in PATA. This is done in order to avoid keeping trace of all $y^{j}$, $j=l,..,k$, which carries a heavy computational weight. Instead, we only need to know the current value of $z^{k}$, the sum $\gamma^{\text{sum},l,k}$, $\gamma^{k+1-l}$ and, last but not least, the current point $y^{k+1}$. This allows us to save 4 entities only, which is far more convenient.
\item
Because the feasible set $Y=\mathbb B(0,1)$ is the unit ball of radius 1, the computation of the projection steps (see Step \ref{S.2}) becomes straightforward, since it is sufficient to divide the argument by its vector norm:
\[
P_{\mathbb B(0,1)}(w) = \frac{w}{\|w\|_2}       \          \forall w:\|w\|_2  \geq 1.
  \]   
Moreover, a closed-form expression for the unique solution $u$ of the minimum problem at Step \ref{S.3}:
\[
u = \arg\min_{y \in \mathbb B(0,1) }\ [F(z^{k+1}) + \frac{1}{\tau^{k}} G(z^{k+1})]^\top (y - z^{k+1})
  \]
is achievable. On the basis that the feasible set $Y=\mathbb B(0,1)$ becomes an active constraint at the optimal solution $u$, the KKT-multiplier associated to this constraint is strictly positive. We, of course, do not know the value of the multiplier itself, but we can impose that the optimal point has Euclidean norm 1, so that it belongs to the boundary of $\mathbb B(0,1)$:
\[
u = -\frac{F(z^{k+1}) + \frac{1}{\tau^{k}} G(z^{k+1})}{\|F(z^{k+1}) + \frac{1}{\tau^{k}} G(z^{k+1})\|_2}.
 \]
\end{itemize}
We now show the implemented scheme in Algorithm \ref{al:algoexample}.
\medskip
{

\IncMargin{1em}
\begin{algorithm}[H]
\KwData{$ z^0, y^0 \in \mathbb B(0,1) $, $i \leftarrow 1, \, l \leftarrow 0$, $k^\text{max} > 0$, $\text{tol} > 0$, $\gamma^{\text{sum,l,k}} \leftarrow 0$, $a > 0$, $\alpha \in (0,1]$, $\beta > 1$;}
\medskip
\For{$k=1,\ldots, k^\text{max}$}{
\nlset{(S.1)}{$\gamma^k = \min\left\{1,\frac{a}{(k-l)^\alpha}\right\}$, $\tau^{k}=i$, $\varepsilon^{k} = \frac{1}{i^\beta}$;

\nlset{(S.2)}{$v = y^k - \gamma^{k} [F(y^k) + \frac{1}{\tau^{k}} G(y^k)])$}\;
\nlset{(S.3)}\eIf{$\|v\|_2 \geq 1$}{
   $y^{k+1} =\frac{v}{\|v\|_2}$\;
   }{
   $y^{k+1} = v$\;
   }}
\nlset{(S.4)}$z^{k+1} = \frac{z^k \gamma^{\text{sum},l,k}+ \gamma^{k}y^{k+1}}{\gamma^{\text{sum},l,k} + \gamma^{k}}$\;
\nlset{(S.5)}$u = -\frac{F(z^{k+1}) + \frac{1}{\tau^{k}} G(z^{k+1})}{\|F(z^{k+1}) + \frac{1}{\tau^{k}} G(z^{k+1})\|_2}$\;

\nlset{(S.6)}\eIf{$[F(z^{k+1}) + \frac{1}{\tau^{k}} G(z^{k+1})]^\top (u - z^{k+1}) \geq -\varepsilon^k$}{\If{$\varepsilon^k \leq \textnormal{tol}$}{\textbf{break}}$i \leftarrow i+1$\; $l \leftarrow k+1$\; $\gamma^{\text{sum},l,k+1} = 0$;}
{$\gamma^{\text{sum},l,k+1} = \gamma^{\text{sum},l,k} + \gamma^k$\;}}



\Return{$z^{k+1}$.}
\caption{\label{al:algoexample}Practical version of PATA}
\end{algorithm}}
\medskip

\noindent
As far as the steps of Algorithm \ref{al:algoexample} are concerned, (S.2) and (S.3) perform step (S.2) of PATA, while (S.5) and (S.6) fulfil step (S.4) of PATA.

We set the parameters $k^{\max} = 10^6$, $\text{tol} = 10^{-3}$, $a = \alpha = \frac{1}{2}$, $\beta = 2$.
Table \ref{tab:my-table} summarizes the results obtained by running Algorithm \ref{al:algoexample}. It is clear to see how $\|z^{k+1}\|_2$ tends to 0 as the number of iterations $k$ grows, which is what we expected, being $z^*=(0, 0)\trt$ the unique solution of the problem.  

\begin{table}
\centering
\caption{Numerical experiment: results. \label{tab:my-table}}
\begin{tabular}{cccc}
\textbf{i} & \textbf{k} & \textbf{$\varepsilon^{k}$} & \textbf{$\|z^{k+1}\|_2$} \\ \hline
1          & 1          & 1.00000           & 1.00e+00          \\ 
2          & 50         & 0.25000           & 3.28e-01          \\ 
3          & 107        & 0.11111           & 1.29e-01          \\ 
4          & 165        & 0.06250           & 6.78e-02          \\ 
5          & 223        & 0.04000           & 4.05e-02          \\ 
6          & 281        & 0.02778           & 2.57e-02          \\ 
7          & 339        & 0.02041           & 2.01e-02          \\ 
8          & 540        & 0.01562           & 1.48e-02          \\ 
9          & 740        & 0.01235           & 1.20e-02          \\ 
10         & 1166       & 0.01000           & 9.73e-03          \\ 
\vdots        & \vdots        & \vdots               & \vdots               \\ 
20         & 17691      & 0.00250           & 2.55e-03          \\ 
21         & 21952      & 0.00227           & 2.32e-03          \\ 
22         & 27084      & 0.00207           & 2.10e-03          \\ 
23         & 33167      & 0.00189           & 1.92e-03          \\ 
24         & 40281      & 0.00174           & 1.77e-03          \\ 
25         & 48506      & 0.00160           & 1.63e-03          \\ 
26         & 59199      & 0.00148           & 1.49e-03          \\ 
27         & 71242      & 0.00137           & 1.39e-03          \\ 
28         & 84715      & 0.00128           & 1.30e-03          \\ 
29         & 99699      & 0.00119           & 1.21e-03          \\ 
30         & 117950     & 0.00111           & 1.13e-03          \\ 
31         & 137950     & 0.00104           & 1.06e-03          \\ 
32         & 161698     & 0.00098           & 9.88e-04          \\ \hline
\end{tabular}
\end{table}

To further reiterate the elements of novelty that PATA displays, we hereby present some numerical 
experiments in which PATA performs better than Algorithm 1 presented in \cite{lampariello2020explicit}.
We do not intend to present a thorough numerical comparison between these solution methods, we just want to show that PATA is a fundamental solution tool when the classical Tikhonov gradient method presented in \cite{lampariello2020explicit} struggles to converge.

Again, for the sake of simplicity we consider $Y=\mathbb B(0,1)$. This time, we extend the problem to encompass $n=100$ variables and consider 
$G(x) = M_{G} x + b_G$ and $F(x) = M_F$, with 
$$
M_* = \begin{pmatrix}  & & & & & v_1^{M_*} \\ & & & & \reflectbox{$\ddots$} & \\ & & & v_{\frac{n}{2}}^{M_*} & & \\ & & -v_{\frac{n}{2}}^{M_*} & & & \\ & \reflectbox{$\ddots$} & & & & \\ -v_1^{M_*} & & & & & \end{pmatrix} + \zeta u^{M_*} (u^{M_*} + 0.01 w^{M_*})^\top, \quad * = G, F,
$$
$b_G = \zeta v^{b_G}$, and $v^{M_G}$, $u^{M_G}$, $w^{M_G}$, $v^{M_F}$, $u^{M_F}$, $w^{M_F}$, and $v^{b_G}$ are randomly generated between 0 and 1, $\zeta > 0$. We remark that when $\zeta =0$ the problem is a generalization of that in the simple example described at the beginning of this section. In our experiments, we consider the cases $\zeta = 0.1$ and $0.01$.

As far as PATA parameters are concerned, for the purpose of the implementation we set $k^{\max} = 5 \cdot 10^4$, $a = 1$, $\alpha= \frac{1}{4}$ and $\beta = 2$. As for the Tikhonov scheme proposed in \cite{lampariello2020explicit}, we set $\lambda = 0.1$.

Following Proposition \ref{th: upper level optimality}, a merit function for the nested variational inequality \eqref{eq:pureh} can be given by
$$
\text{optimality measure}(k) \triangleq \max\left\{ \varepsilon_{\text{sub}}^k \tau^k, \varepsilon_{\text{sub}}^k + \frac{1}{\tau^k} \right\}.
$$
We generated 3 different instances of the problem and considered 2 values for $\zeta$, for a total of 6 different test problems.

Figure \ref{fig:numerical} shows the evolution of the optimality measure as the number of inner iterations $k$ grows towards $k^{\max}$, when both PATA and the classical Tikhonov algorithm described in \cite{lampariello2020explicit} are applied to the 6 test problems. 

It is clear to see how the practical implementation for PATA always outperforms the classical Tikhonov in \cite{lampariello2020explicit}, as it needs a significant smaller number of inner iterations $k$ to reach small values of the optimality measure. 
However, we remark that computing averages, such as in step \ref{S.2} of PATA, can be computationally expensive.
Hence, PATA becomes an essential alternative tool when other Tikhonov-like methods, not including averaging steps, either fail to reach small values of the optimality measure, as shown in our practical implementation (see Figure \ref{fig:numerical}), or do not converge at all.   

\begin{figure}
\begin{subfigure}{.5\textwidth}
  \centering
  \includegraphics[width=.8\linewidth]{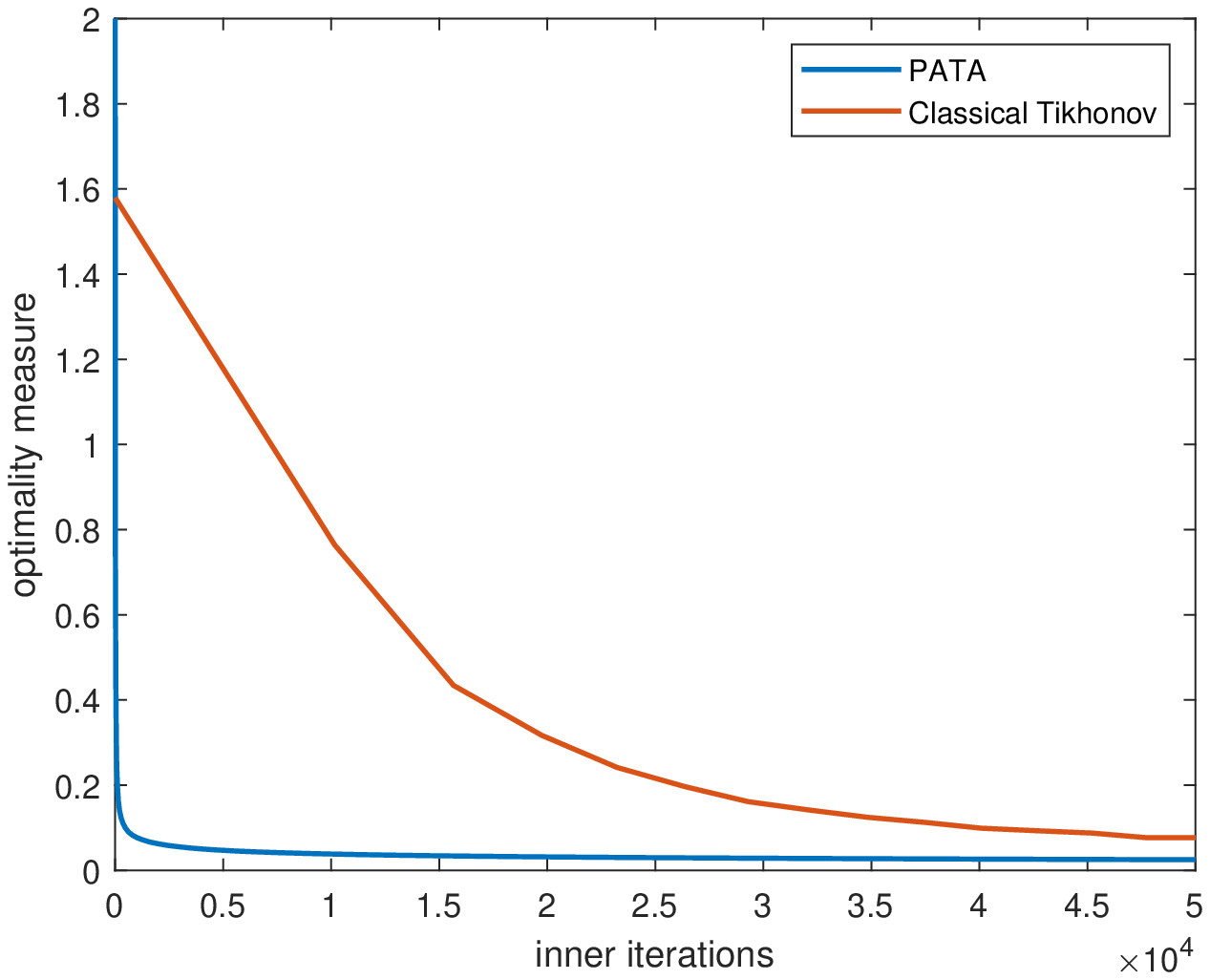}
  \caption{1a \label{fig:sfig1}}
\end{subfigure}%
\vspace{5pt}
\begin{subfigure}{.5\textwidth}
  \centering
  \includegraphics[width=.8\linewidth]{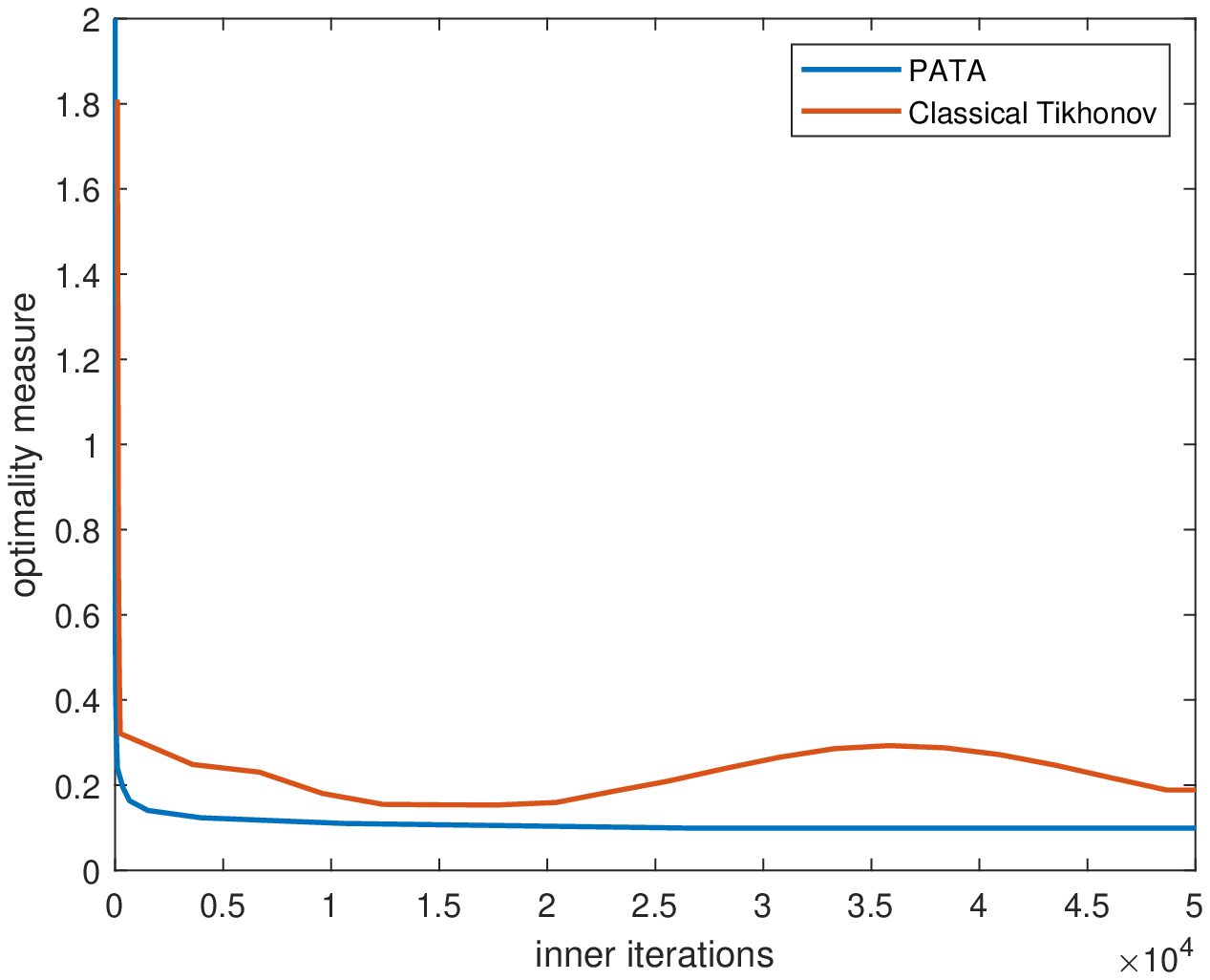}
  \caption{1b \label{fig:sfig2}}
\end{subfigure}
\vspace{5pt}
\begin{subfigure}{.5\textwidth}
  \centering
  \includegraphics[width=.8\linewidth]{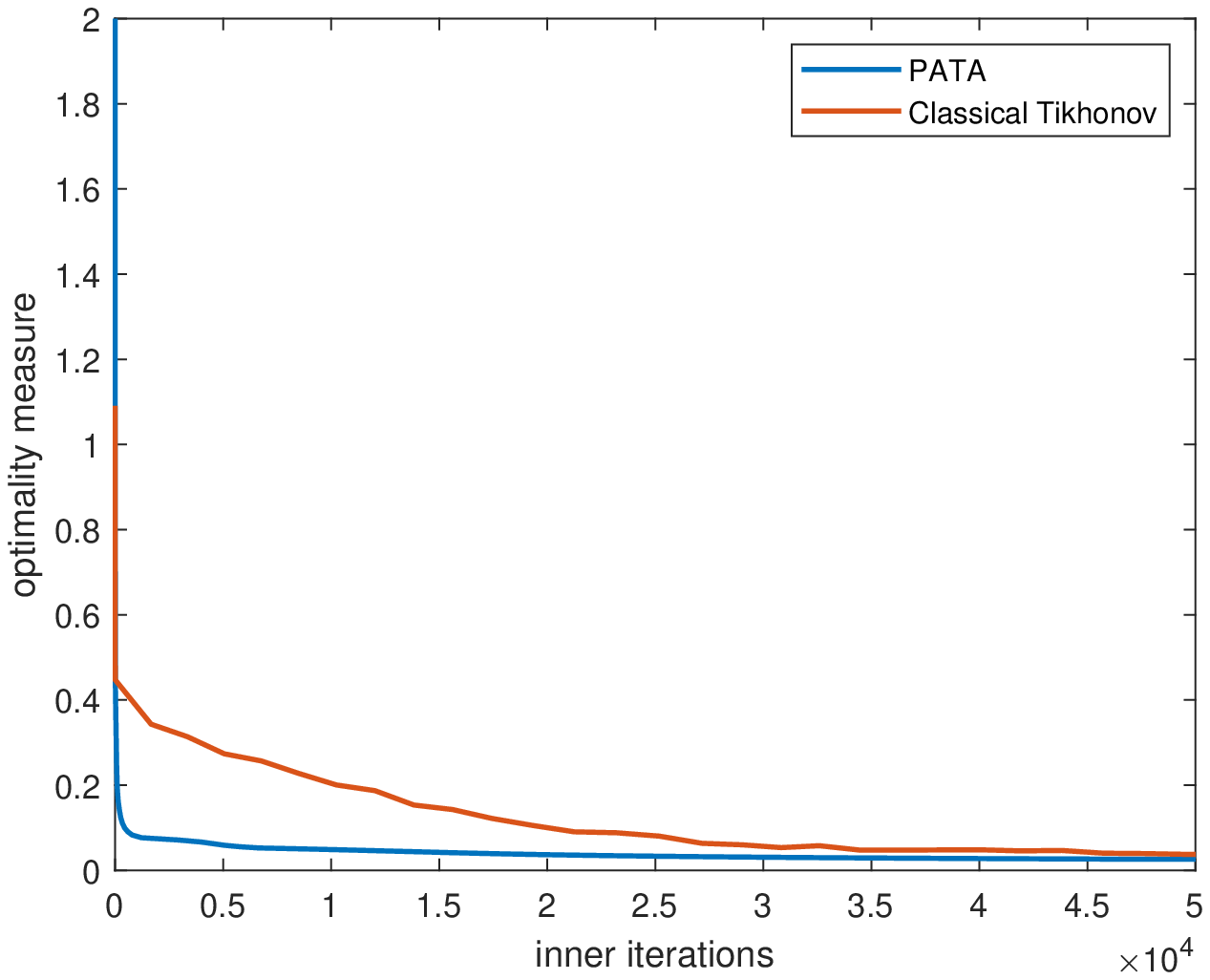}
  \caption{1c \label{fig:sfig3}}
\end{subfigure}%
\begin{subfigure}{.5\textwidth}
  \centering
  \includegraphics[width=.8\linewidth]{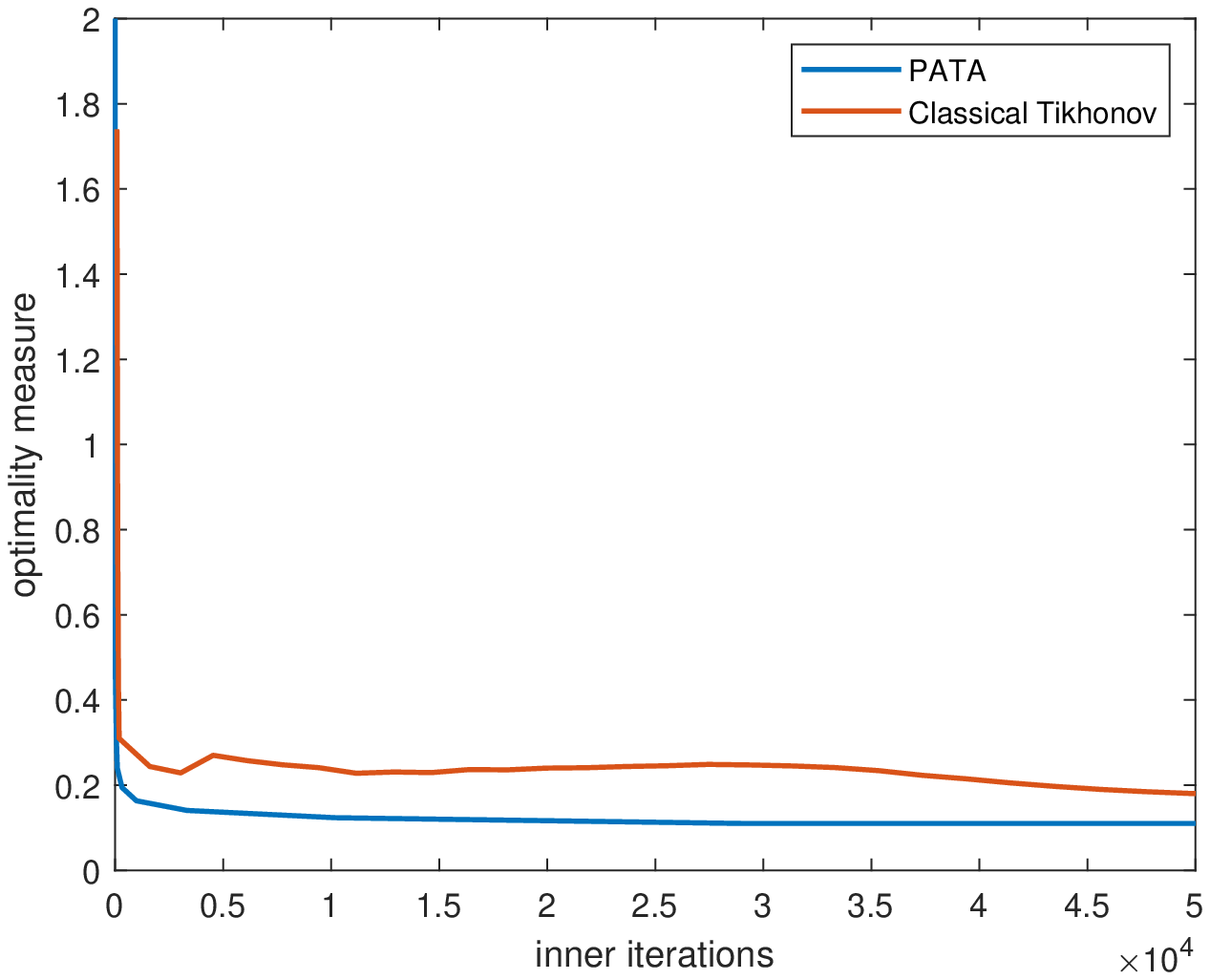}
  \caption{1d \label{fig:sfig4}}
\end{subfigure}
\begin{subfigure}{.5\textwidth}
  \centering
  \includegraphics[width=.8\linewidth]{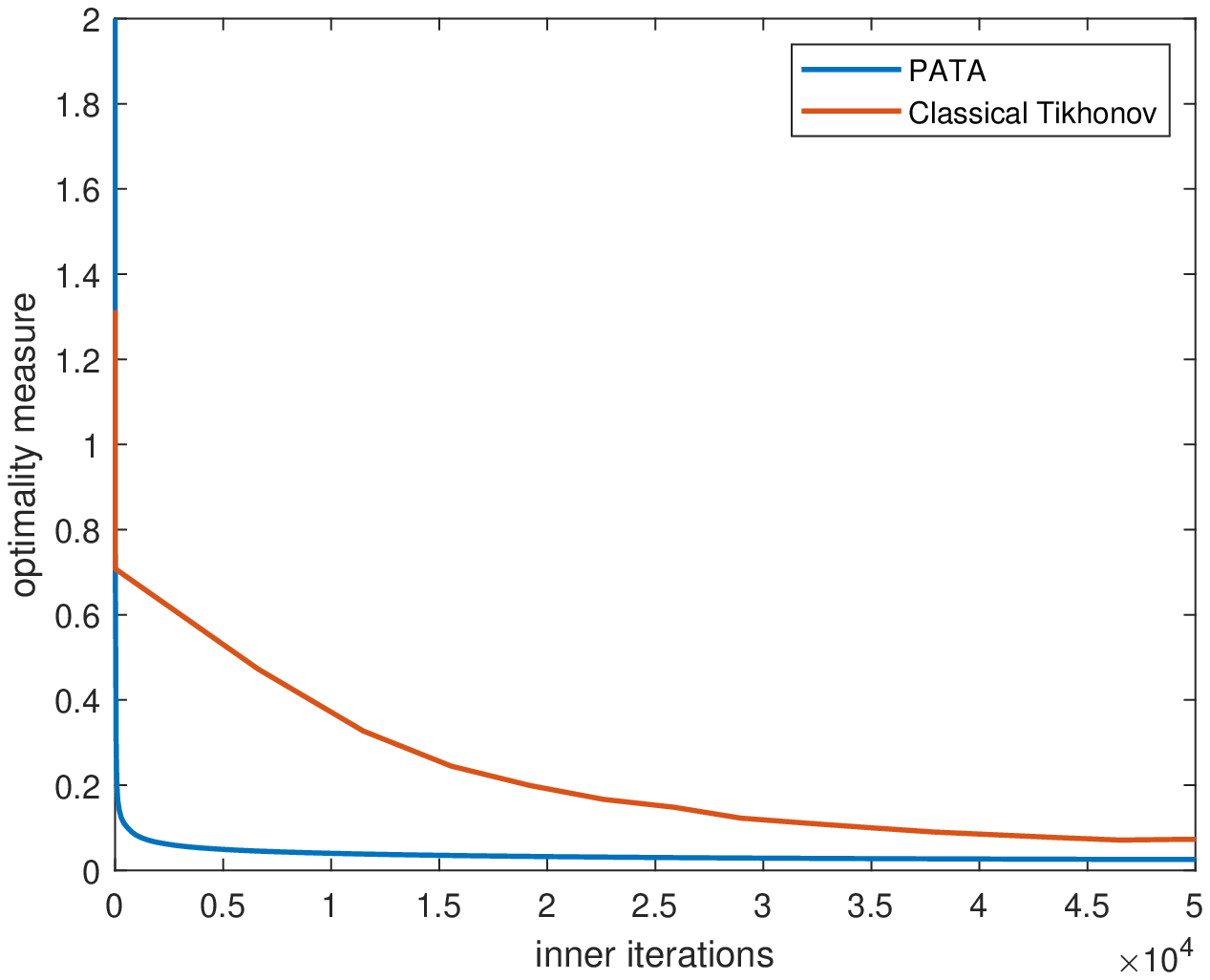}
  \caption{1e \label{fig:sfig5}}
\end{subfigure}
\begin{subfigure}{.5\textwidth}
  \centering
  \includegraphics[width=.8\linewidth]{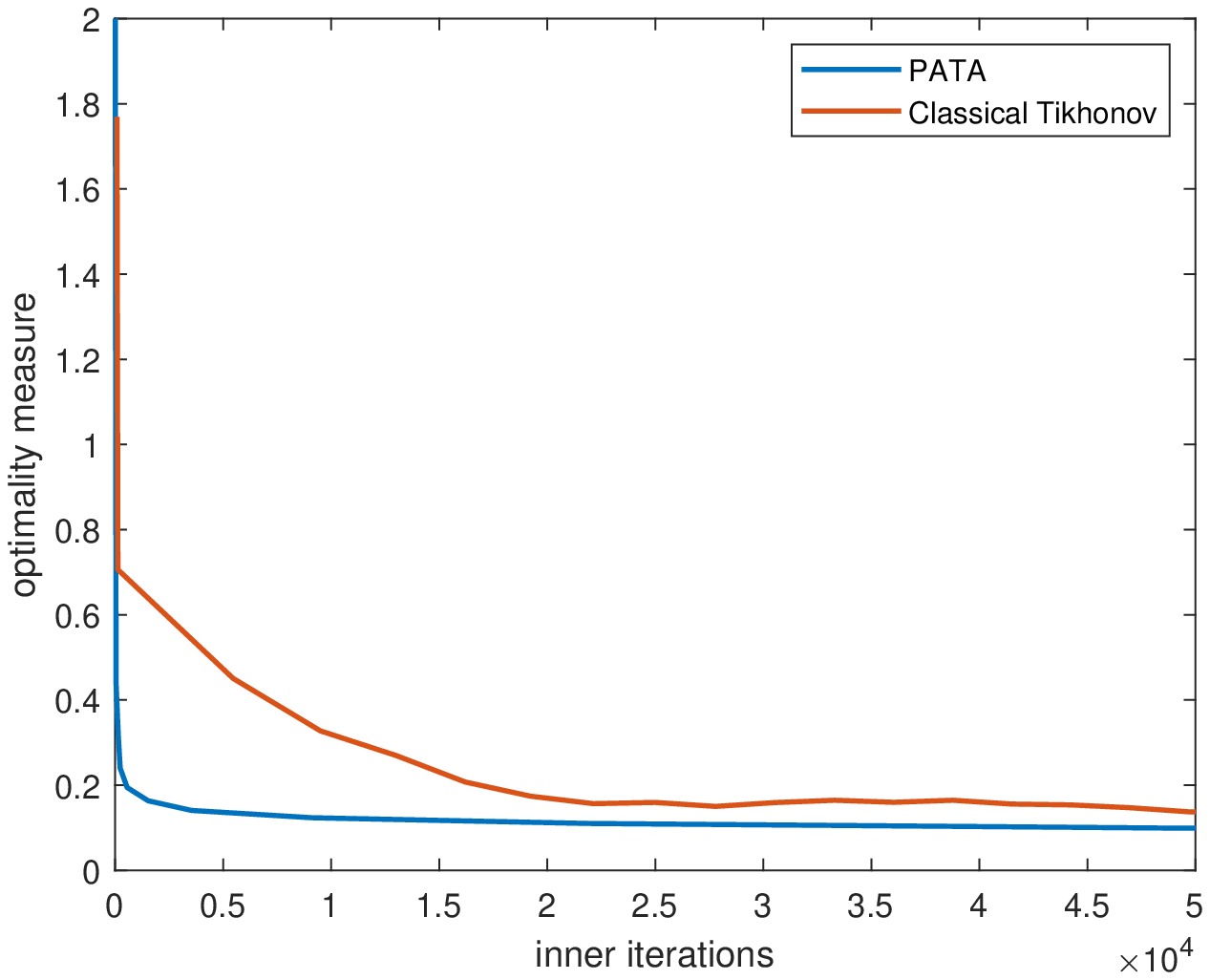}
  \caption{1f \label{fig:sfig6}}
\end{subfigure}
\caption{Plots \ref{fig:sfig1}, \ref{fig:sfig3} and \ref{fig:sfig5} correspond to the value $\zeta=0.01$; plots \ref{fig:sfig2}, \ref{fig:sfig4} and \ref{fig:sfig6} correspond to $\zeta=0.1$. Each row is related to a different instance of the problem, namely a different seed for the random generation.\label{fig:numerical}}
\end{figure}

\section{Conclusions}

We have shown that PATA is (subsequentially) convergent to solutions of monotone nested variational inequalities under the weakest conditions in the literature so far, see Theorem \ref{th:conv2}. Specifically, besides the standard convexity and monotonicity assumptions, $G$ is required to be just monotone, while all other papers demand the monotonicity plus of $G$, see \cite{facchinei2014vi,lampariello2020explicit}. 

In addition, PATA enjoys interesting complexity properties, see Theorems \ref{th:ftermination} and \ref{th:ftermination easy}. 
Notice that we have provided the first complexity analysis for nested variational inequalities considering optimality of both the upper- and lower-level. Conversely, authors in \cite{lampariello2020explicit} only handled lower-level optimality.

Possible future research may focus on generalizing the problem to consider quasi variational inequalities as well as generalized variational inequalities. The first step would be extending Proposition \ref{th: upper level optimality} to encompass these more complex variational problems. We leave this investigation to following works.

\section*{Appendix}

The following proposition is instrumental for the discussion regarding the natural map $V$ in section \ref{sec:conv}.

\begin{proposition}\label{th: inexact feasibility relations}
 Let $x \in K$ satisfy the primal VI approximate optimality condition \eqref{eq: inexact optimality}, then the natural map approximate optimality condition \eqref{eq: problem inexact feasibility natural gen} holds with $\widehat \varepsilon \geq \sqrt{\varepsilon}$.
 Vice versa, let $x \in K$ satisfy condition \eqref{eq: problem inexact feasibility natural gen}, then \eqref{eq: inexact optimality} holds with $\varepsilon \geq (\Omega + \Xi) \widehat \varepsilon$, where $\Omega \triangleq \max_{v, y \in K}\|v - y\|_2$ and $\Xi \triangleq \max_{y \in K} \|\Psi(y)\|_2$.
\end{proposition}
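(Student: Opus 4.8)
The plan is to derive both implications directly from the variational characterization of the Euclidean projector, namely that $v \triangleq P_K(x - \Psi(x))$ is the unique point of $K$ satisfying $(x - \Psi(x) - v)^\top(y - v) \le 0$ for all $y \in K$, combined with the Cauchy--Schwarz inequality and the compactness of $K$ granted by (A3). Throughout I would write $U(x) = \|v - x\|_2$ with this $v$.

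For the first implication, I would test the projection inequality at the feasible point $y = x \in K$, which gives $(x - \Psi(x) - v)^\top(x - v) \le 0$, i.e.
\[
\|x - v\|_2^2 \le \Psi(x)^\top(x - v).
\]
Since $v \in K$ by definition, condition \eqref{eq: inexact optimality} applied at $y = v$ yields $\Psi(x)^\top(v - x) \ge -\varepsilon$, hence $\Psi(x)^\top(x - v) \le \varepsilon$. Combining the two displays gives $U(x)^2 = \|x - v\|_2^2 \le \varepsilon$, that is $U(x) \le \sqrt{\varepsilon}$, which is \eqref{eq: problem inexact feasibility natural gen} with $\widehat\varepsilon \ge \sqrt{\varepsilon}$.

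For the converse, starting again from the projection inequality, for an arbitrary $y \in K$ I would decompose
\[
\Psi(x)^\top(y - x) = \Psi(x)^\top(y - v) + \Psi(x)^\top(v - x) \ge (x - v)^\top(y - v) + \Psi(x)^\top(v - x),
\]
and then bound each term from below by Cauchy--Schwarz: $(x - v)^\top(y - v) \ge -\|x - v\|_2\,\|y - v\|_2 \ge -\Omega\,\widehat\varepsilon$, using $U(x) = \|x - v\|_2 \le \widehat\varepsilon$ together with $\|y - v\|_2 \le \Omega$ since $y, v \in K$; and $\Psi(x)^\top(v - x) \ge -\|\Psi(x)\|_2\,\|v - x\|_2 \ge -\Xi\,\widehat\varepsilon$. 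Summing these bounds yields $\Psi(x)^\top(y - x) \ge -(\Omega + \Xi)\widehat\varepsilon$ for every $y \in K$, i.e. \eqref{eq: inexact optimality} with $\varepsilon \ge (\Omega + \Xi)\widehat\varepsilon$.

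I do not expect a genuine obstacle here: the argument is a short two-sided estimate around the projection. The only points needing a modicum of care are checking that the test points used ($y = x$ in the first part, $y = v$ in both parts) indeed lie in $K$ — which holds because $x \in K$ by hypothesis and $v = P_K(\cdot) \in K$ by construction — and noting that the constants $\Omega$ and $\Xi$ are finite, which follows from the compactness of $K$ in (A3) and the continuity of $\Psi$.
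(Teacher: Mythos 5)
Your proof is correct and follows essentially the same route as the paper's: both implications rest on the variational characterization of the projection tested at $y=x$, the approximate optimality condition tested at the projected point, and (for the converse) the same decomposition of $\Psi(x)^\top(y-x)$ followed by Cauchy--Schwarz with the compactness bounds $\Omega$ and $\Xi$. No gaps.
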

\begin{proof}
 Let $z = P_K(x-\Psi(x))$. If \eqref{eq: inexact optimality} holds, then
 $$
 \begin{array}{rcl}
- \varepsilon & \le & \Psi(x)^\top (z - x) =  [x - z - (x - z - \Psi(x))]^\top (z - x)\\[5pt]
& = &  - \|x - z\|_2^2 - (x - \Psi(x) - z)^\top (z - x) \leq - \|x - z\|_2^2 = - U(x)^2,
\end{array}
 $$
 where the last inequality is due to the characteristic property of the projection.
 Therefore, \eqref{eq: problem inexact feasibility natural gen} holds with $\widehat \varepsilon \geq \sqrt{\varepsilon}$.
 
 Now we consider the case in which \eqref{eq: problem inexact feasibility natural gen} holds.
 Thanks again to the characteristic property of the projection, we have
\[
[z - (x - \Psi(x))]^\top (y - z) \ge 0, \quad \forall y \in K,
\]
and, thus, for all $y \in K$,
\begin{align*}
\Psi(x)^\top (y - x) & \ge (x - z)^\top (y - z) + \Psi(x)^\top (z - x)  \\
                  & \ge - (\|y - z\|_2 + \|\Psi(x)\|_2) \|x - z\|_2 \\
                  & \ge - (\Omega + \Xi) U(x) \ge - (\Omega + \Xi) \widehat \varepsilon. 	
\end{align*}
Therefore, \eqref{eq: inexact optimality} holds with $\varepsilon \geq (\Omega + \Xi) \widehat \varepsilon$.
\hfill $\square$
\end{proof}

\noindent
The following lemma is helpful to prove Theorems \ref{th:ftermination} and \ref{th:ftermination easy}.

\begin{lemma}\label{th:boundsforgamma}
Let $\{\gamma^k\ : \gamma^k=\min\{1,\frac{a}{k^\alpha}\}\}$, \textcolor{black}{with $\alpha \in (0,1]$ and $a>0$}. Setting $K \in \mathbb{N}$, the following upper and lower bounds hold true:
\begin{enumerate}[(i)]
 \item
$\alpha \ne 1$ : $\sum_{k=0}^K \gamma^k \geq \lceil a^\frac{1}{\alpha} \rceil + \frac{a}{1-\alpha}[(K+1)^{1-\alpha} - \lceil a^\frac{1}{\alpha} \rceil^{1-\alpha}]$;  
 \item
$\alpha = 1$ : $\sum_{k=0}^K \gamma^k \geq \lceil a \rceil + \ln \left((\frac{K+1}{\lceil a \rceil})^a\right)$; 
 \item
\textcolor{black}{$\alpha \neq \frac{1}{2}$ : $\sum_{k=0}^K (\gamma^k)^2 \leq \lceil a^\frac{1}{\alpha} \rceil + \frac{a^2}{\lceil a^\frac{1}{\alpha} \rceil ^{2\alpha}} + \frac{a^2}{1-2\alpha}[K^{1-2\alpha} - \lceil a^\frac{1}{\alpha} \rceil^{1-2\alpha}]; $}
 \item
$\alpha = \frac{1}{2}$ : $\sum_{k=0}^K (\gamma^k)^2 \leq \lceil a^2 \rceil + \frac{a^2}{\lceil a^2 \rceil}+ a^2\ln(\frac{K}{\lceil a^2 \rceil}).$
 \end{enumerate} 	
\end{lemma}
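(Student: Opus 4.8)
The plan is to split each of the four partial sums into the initial block of indices on which the step-size is clipped to the value $1$ and the tail on which it behaves like a pure power of $k$, and then to estimate that tail by a monotone integral comparison.

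First I would note that, since $\gamma^k=\min\{1,a/k^\alpha\}$, one has $\gamma^k=1$ precisely when $k^\alpha\le a$, i.e. when $k\le a^{1/\alpha}$ (with the usual convention $\gamma^0=1$), and $\gamma^k=a/k^\alpha$ otherwise. Because $\lceil a^{1/\alpha}\rceil-1<a^{1/\alpha}\le\lceil a^{1/\alpha}\rceil$, and because at an integer threshold one has $1=a/(a^{1/\alpha})^\alpha$, this yields $\gamma^k=1$ for $0\le k\le\lceil a^{1/\alpha}\rceil-1$ and $\gamma^k=a/k^\alpha$ for $k\ge\lceil a^{1/\alpha}\rceil$; the same holds for $(\gamma^k)^2=\min\{1,a^2/k^{2\alpha}\}$, with the identical transition index. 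Writing $m\triangleq\lceil a^{1/\alpha}\rceil$ and focusing on the regime $K\ge m$ (the remaining cases, where every summand equals $1$, being immediate), I obtain the exact decompositions
$$
\sum_{k=0}^K\gamma^k \;=\; m+\sum_{k=m}^K\frac{a}{k^\alpha},\qquad
\sum_{k=0}^K(\gamma^k)^2 \;=\; m+\sum_{k=m}^K\frac{a^2}{k^{2\alpha}}.
$$

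Next I would bound each tail, a sum of values of a positive strictly decreasing function, by the standard integral test. For the lower bounds in items (i) and (ii) I would use $\sum_{k=m}^K f(k)\ge\int_m^{K+1}f(t)\,dt$ with $f(t)=a/t^\alpha$: evaluating the primitive, which is $\frac{a}{1-\alpha}t^{1-\alpha}$ when $\alpha\ne1$ and $a\ln t$ when $\alpha=1$, at $t=m$ and $t=K+1$ reproduces exactly the stated right-hand sides, after noting that $m=\lceil a\rceil$ when $\alpha=1$. For the upper bounds in items (iii) and (iv) I would instead use $\sum_{k=m}^K f(k)\le f(m)+\int_m^{K}f(t)\,dt$ with $f(t)=a^2/t^{2\alpha}$: the boundary term $f(m)=a^2/m^{2\alpha}$ is exactly the second summand appearing in (iii) and (iv), while the integral, whose primitive is $\frac{a^2}{1-2\alpha}t^{1-2\alpha}$ when $\alpha\ne1/2$ and $a^2\ln t$ when $\alpha=1/2$, evaluated at $m$ and $K$ contributes the remaining terms (with $m=\lceil a^2\rceil$ when $\alpha=1/2$).

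The argument is elementary, and the only slightly delicate points are bookkeeping ones: pinning down the transition index $m$ so that the decomposition is correct whether or not $a^{1/\alpha}$ is an integer — which is handled by the two-sided estimate $\lceil a^{1/\alpha}\rceil-1<a^{1/\alpha}\le\lceil a^{1/\alpha}\rceil$ together with the coincidence $\gamma^k=1=a/k^\alpha$ at an integer threshold — and choosing the correct endpoints in the integral test, namely the bound $\int_m^{K+1}$ for the lower estimates versus the $f(m)+\int_m^{K}$ bound for the upper estimates, since an off-by-one there would move the constants away from those claimed. Treating $\alpha=1$ and $\alpha=1/2$ separately is forced only by the logarithmic antiderivative of $t^{-1}$, and is otherwise structurally identical.
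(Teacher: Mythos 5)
Your proof is correct and follows essentially the same route as the paper's: decompose the sum into the initial block of $\lceil a^{1/\alpha}\rceil$ clipped terms equal to $1$ plus the power-law tail, then bound the tail via the integral test, using $\int_m^{K+1}$ for the lower bounds in (i)--(ii) and peeling off the first tail term $a^2/\lceil a^{1/\alpha}\rceil^{2\alpha}$ before applying $\int_m^{K}$ for the upper bounds in (iii)--(iv). Your explicit handling of the transition index (including the integer-threshold coincidence $1=a/(a^{1/\alpha})^\alpha$) is a welcome clarification of a point the paper leaves implicit.
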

\begin{proof}
In cases $(i)$ and $(ii)$:
\begin{equation*}\label{eq:chain}
\begin{array}{rcl}
\sum_{k=0}^K \gamma^k & = & \sum_{k=0}^K\min\{1,\frac{a}{k^\alpha}\} \\[5pt]
& = & \lceil a^\frac{1}{\alpha} \rceil + \sum_{k=\lceil a^\frac{1}{\alpha} \rceil}^K \frac{a}{k^\alpha}\\[5pt]
& \geq & \lceil a^\frac{1}{\alpha} \rceil + \int_{ \lceil a^\frac{1}{\alpha} \rceil }^{K+1} ax^{-\alpha} dx,
\end{array}
\end{equation*}
where the inequality is due to the integral test for Harmonic series. When $\alpha \ne 1$ it follows that:
\begin{equation*}\label{eq:chain}
\begin{array}{rcl}
\sum_{k=0}^K \gamma^k & \geq & \lceil a^\frac{1}{\alpha} \rceil + a \frac{x^{1-\alpha}}{1-\alpha}\big|_{ \lceil a^\frac{1}{\alpha} \rceil }^{K+1}\\[5pt]
& = & \lceil a^\frac{1}{\alpha} \rceil + \frac{a}{1-\alpha}[(K+1)^{1-\alpha} - \lceil a^\frac{1}{\alpha} \rceil^{1-\alpha}],
\end{array}
\end{equation*}
whilst, if $\alpha = 1$, it follows that:
\begin{equation*}\label{eq:chain}
\begin{array}{rcl}
\sum_{k=0}^K \gamma^k & \geq & \lceil a \rceil + a \ln\lvert x\rvert\big|_{ \lceil a \rceil }^{K+1}\\[5pt]
& = & \lceil a \rceil + \ln \left( (\frac{K+1}{\lceil a \rceil})^a \right).
\end{array}
\end{equation*}
\textcolor{black}{In cases $(iii)$ and $(iv)$: 
\begin{equation*}\label{eq:chain}
\begin{array}{rcl}
\sum_{k=0}^K (\gamma^k)^2 & = & \sum_{k=0}^K(\min\{1,\frac{a}{k^\alpha}\})^2 \\[5pt] 
& = & \lceil a^\frac{1}{\alpha} \rceil + \sum_{k=\lceil a^\frac{1}{\alpha} \rceil}^K \frac{a^2}{k^{2\alpha}}\\[5pt]
& = & \lceil a^\frac{1}{\alpha} \rceil + \frac{a^2}{\lceil a^\frac{1}{\alpha} \rceil ^{2\alpha}} + \sum_{k=\lceil a^\frac{1}{\alpha} \rceil + 1}^K \frac{a^2}{k^{2\alpha}} \\[5pt]
& \leq & \lceil a^\frac{1}{\alpha} \rceil + \frac{a^2}{\lceil a^\frac{1}{\alpha} \rceil ^{2\alpha}} + \int_{ \lceil a^\frac{1}{\alpha} \rceil}^{K} a^2x^{-2\alpha} dx,
\end{array}
\end{equation*}
where, in the third equality, the first term of the series is taken out, whilst the inequality is once again due to the integral test for Harmonic series.}

\textcolor{black}{When $\alpha \neq \frac{1}{2}$ it follows that:
\begin{equation*}\label{eq:chain}
\begin{array}{rcl}
\sum_{k=0}^K (\gamma^k)^2 & \leq & \lceil a^\frac{1}{\alpha} \rceil + \frac{a^2}{\lceil a^\frac{1}{\alpha} \rceil ^{2\alpha}} + a^2 \frac{x^{1-2\alpha}}{1-2\alpha}\big|_{ \lceil a^\frac{1}{\alpha} \rceil }^{K}\\[5pt]
& = & \lceil a^\frac{1}{\alpha} \rceil + \frac{a^2}{\lceil a^\frac{1}{\alpha} \rceil ^{2\alpha}} + \frac{a^2}{1-2\alpha}[K^{1-2\alpha} - \lceil a^\frac{1}{\alpha} \rceil^{1-2\alpha}],
\end{array}
\end{equation*}    }
while, if $\alpha = \frac{1}{2}$, it follows that:
\begin{equation*}\label{eq:chain}
\begin{array}{rcl}
\sum_{k=0}^K (\gamma^k)^2 & \leq & \lceil a^2 \rceil + \frac{a^2}{\lceil a^2 \rceil} + a^2\ln\lvert x\rvert\big|_{ \lceil a^2 \rceil }^{K}\\[5pt]
& = & \lceil a^2 \rceil + \frac{a^2}{\lceil a^2 \rceil}+ a^2\ln(\frac{K}{\lceil a^2 \rceil}).  
\end{array}
\end{equation*}
\hfill $\square$
\end{proof}

\noindent
The following proposition enables us to use $\gamma^k=\min\{1,\frac{a}{k^\alpha}\}$, which is shown to satisfy conditions \eqref{eq: conditions for convergence} in Theorem \ref{th:conv2}.

\begin{proposition}\label{th: sequence complies with theorem}
The sequence of stepsizes $\{\gamma^k\ : \gamma^k=\min\{1,\frac{a}{k^\alpha}\}\}$, with $\alpha \in (0,1]$ and $a>0$, satisfies both sets of hypotheses:  
\begin{enumerate}[(i)]
\item
$\sum_{k=0}^\infty \gamma^k = \infty $
\item
$\frac{\sum_{k=0}^\infty (\gamma^k)^2}{\sum_{k=0}^\infty \gamma^k} = 0$
\end{enumerate}
needed for Theorem \ref{th:conv2} to be valid, see conditions \eqref{eq: conditions for convergence}.
\end{proposition}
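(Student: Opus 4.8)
The plan is to verify the two conditions separately, in both cases splitting the analysis according to whether $\alpha = 1$ (or $\alpha = \tfrac12$) or not, and to lean directly on the bounds already established in Lemma~\ref{th:boundsforgamma}. For part $(i)$, I would observe that $\sum_{k=0}^\infty \gamma^k = \lim_{K\to\infty}\sum_{k=0}^K \gamma^k$, and then invoke the lower bounds in cases $(i)$ and $(ii)$ of Lemma~\ref{th:boundsforgamma}: when $\alpha \ne 1$ the dominant term is $\frac{a}{1-\alpha}(K+1)^{1-\alpha}$, which diverges to $+\infty$ since $1-\alpha > 0$ (here $\alpha \in (0,1)$); when $\alpha = 1$ the lower bound contains $\ln\big((\tfrac{K+1}{\lceil a\rceil})^a\big) = a\ln(K+1) - a\ln\lceil a\rceil$, which likewise tends to $+\infty$. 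In either case the whole series is bounded below by a quantity diverging to infinity, hence $\sum_{k=0}^\infty \gamma^k = \infty$.

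For part $(ii)$, the strategy is to show the numerator $\sum_{k=0}^\infty (\gamma^k)^2$ is either finite or grows strictly slower than the denominator, which we just showed diverges. I would again split cases. If $\alpha > \tfrac12$ then $1-2\alpha < 0$, so the upper bound in case $(iii)$ of Lemma~\ref{th:boundsforgamma} shows $\sum_{k=0}^K (\gamma^k)^2$ stays bounded as $K\to\infty$ (the term $K^{1-2\alpha}\to 0$); since the denominator $\to\infty$, the ratio tends to $0$. If $\alpha = \tfrac12$, case $(iv)$ gives $\sum_{k=0}^K (\gamma^k)^2 = O(\ln K)$, while part $(i)$ with $\alpha = \tfrac12$ gives $\sum_{k=0}^K \gamma^k \ge \lceil a^2\rceil + \frac{a}{1/2}[(K+1)^{1/2} - \lceil a^2\rceil^{1/2}] = \Theta(\sqrt{K})$, and $\ln K / \sqrt K \to 0$. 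If $\alpha < \tfrac12$, case $(iii)$ gives $\sum_{k=0}^K (\gamma^k)^2 = \Theta(K^{1-2\alpha})$ while case $(i)$ gives $\sum_{k=0}^K \gamma^k = \Theta(K^{1-\alpha})$, and since $(1-2\alpha) - (1-\alpha) = -\alpha < 0$ the ratio $K^{1-2\alpha}/K^{1-\alpha} = K^{-\alpha} \to 0$. In all three subcases the quotient converges to $0$, establishing $(ii)$.

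The only mild subtlety — and the step I would treat most carefully — is the edge case $\alpha < \tfrac12$, where \emph{both} sums diverge and one must compare growth rates rather than simply note boundedness of the numerator; here it is important to use the matching lower bound on the denominator from Lemma~\ref{th:boundsforgamma}$(i)$ (not just divergence) so that the comparison $K^{1-2\alpha}/K^{1-\alpha}\to 0$ is legitimate. One should also note in passing that the finitely many initial terms where $\gamma^k = 1$ (i.e. $k \le \lceil a^{1/\alpha}\rceil$, or the $k=0$ term where $a/k^\alpha$ is undefined) contribute only bounded additive constants and are already absorbed into the cited bounds, so they do not affect any of the limits. Assembling these observations gives the proposition.

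\begin{proof}
We prove the two claims in turn.

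\emph{Claim $(i)$.} Suppose first $\alpha \in (0,1)$. By Lemma~\ref{th:boundsforgamma}$(i)$, for every $K \in \mathbb{N}$,
\[
\sum_{k=0}^K \gamma^k \;\ge\; \lceil a^{\frac{1}{\alpha}} \rceil + \frac{a}{1-\alpha}\Big[(K+1)^{1-\alpha} - \lceil a^{\frac{1}{\alpha}} \rceil^{1-\alpha}\Big].
\]
Since $1-\alpha > 0$, the right-hand side tends to $+\infty$ as $K \to \infty$, hence $\sum_{k=0}^\infty \gamma^k = \infty$.
If instead $\alpha = 1$, Lemma~\ref{th:boundsforgamma}$(ii)$ gives
\[
\sum_{k=0}^K \gamma^k \;\ge\; \lceil a \rceil + \ln\!\left(\Big(\tfrac{K+1}{\lceil a \rceil}\Big)^{a}\right) = \lceil a \rceil + a\ln(K+1) - a\ln\lceil a \rceil,
\]
which again diverges to $+\infty$. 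In both cases $(i)$ holds.

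\emph{Claim $(ii)$.} We distinguish three subcases according to the value of $\alpha$.

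If $\alpha \in (\tfrac12, 1]$, then $1-2\alpha < 0$, so by Lemma~\ref{th:boundsforgamma}$(iii)$ (for $\alpha \ne \tfrac12$, which holds here) the partial sums $\sum_{k=0}^K (\gamma^k)^2$ are bounded above by a constant independent of $K$ (the term $K^{1-2\alpha} \to 0$). Since the denominator $\sum_{k=0}^K \gamma^k \to \infty$ by Claim $(i)$, the ratio tends to $0$.

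If $\alpha = \tfrac12$, Lemma~\ref{th:boundsforgamma}$(iv)$ yields $\sum_{k=0}^K (\gamma^k)^2 \le \lceil a^2 \rceil + \frac{a^2}{\lceil a^2 \rceil} + a^2 \ln\!\big(\tfrac{K}{\lceil a^2 \rceil}\big)$, i.e. the numerator is $O(\ln K)$. On the other hand, Lemma~\ref{th:boundsforgamma}$(i)$ with $\alpha = \tfrac12$ gives $\sum_{k=0}^K \gamma^k \ge \lceil a^2 \rceil + 2a\big[(K+1)^{1/2} - \lceil a^2 \rceil^{1/2}\big]$, so the denominator grows like $\sqrt{K}$. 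Since $\ln K / \sqrt{K} \to 0$, the ratio tends to $0$.

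If $\alpha \in (0, \tfrac12)$, Lemma~\ref{th:boundsforgamma}$(iii)$ gives
\[
\sum_{k=0}^K (\gamma^k)^2 \;\le\; \lceil a^{\frac{1}{\alpha}} \rceil + \frac{a^2}{\lceil a^{\frac{1}{\alpha}} \rceil^{2\alpha}} + \frac{a^2}{1-2\alpha}\Big[K^{1-2\alpha} - \lceil a^{\frac{1}{\alpha}} \rceil^{1-2\alpha}\Big],
\]
so the numerator is $O(K^{1-2\alpha})$, while Lemma~\ref{th:boundsforgamma}$(i)$ shows the denominator is bounded below by a quantity of order $K^{1-\alpha}$. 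Hence
\[
0 \;\le\; \frac{\sum_{k=0}^K (\gamma^k)^2}{\sum_{k=0}^K \gamma^k} \;\le\; \frac{O(K^{1-2\alpha})}{\Theta(K^{1-\alpha})} \;=\; O\!\left(K^{-\alpha}\right) \;\longrightarrow\; 0,
\]
since $\alpha > 0$. In all three subcases the quotient converges to $0$, which proves $(ii)$.
\hfill $\square$
\end{proof}
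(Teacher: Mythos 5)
Your proof is correct and follows essentially the same route as the paper: both parts rest on the same case split ($\alpha=1$ vs.\ $\alpha<1$ for $(i)$; $\alpha>\tfrac12$, $\alpha=\tfrac12$, $\alpha<\tfrac12$ for $(ii)$) and on the same bounds from Lemma~\ref{th:boundsforgamma}. The only cosmetic difference is that the paper evaluates the final limits in the $\alpha\le\tfrac12$ subcases via L'H\^opital's rule, whereas you compare growth rates directly ($\ln K/\sqrt{K}\to 0$ and $K^{1-2\alpha}/K^{1-\alpha}=K^{-\alpha}\to 0$), which is equally valid.
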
 
\begin{proof}
$(i)$ We need to examine two separate cases, i.e. when $\alpha=1$ and $\alpha<1$, when K goes to $+\infty$.

When $\alpha=1$:
 \begin{align*}
 \sum_{k=0}^\infty \gamma^k & \geq \lim_{K \to +\infty} \left( \lceil a \rceil + \ln \left( \left(\frac{K+1}{\lceil a \rceil}\right)^a \right) \right) = +\infty.
 \end{align*}
When, instead, $\alpha<1$:
 \begin{align*}
 \sum_{k=0}^\infty \gamma^k & \geq \lim_{K \to +\infty} \left( \lceil a^\frac{1}{\alpha} \rceil + \frac{a}{1-\alpha}[(K+1)^{1-\alpha} - \lceil a^\frac{1}{\alpha} \rceil^{1-\alpha}] \right) = +\infty.
 \end{align*}
Notice that, if $\alpha>1$, the limit would be $\lceil a^\frac{1}{\alpha} \rceil - \frac{a}{1-\alpha} \lceil a^\frac{1}{\alpha} \rceil^{1-\alpha} \ne +\infty$, hence why it is necessary that $\alpha \leq 1$. \\ \\
$(ii)$ We first show that, if $\alpha \in \left( \frac{1}{2},1 \right]$, the following relation holds:
\begin{align*}
 \sum_{k=0}^\infty (\gamma^k)^2 & \leq \lim_{K \to +\infty} \textcolor{black}{\left( \lceil a^\frac{1}{\alpha} \rceil + \frac{a^2}{\lceil a^\frac{1}{\alpha} \rceil ^{2\alpha}} + \frac{a^2}{1-2\alpha}[K^{1-2\alpha} - \lceil a^\frac{1}{\alpha} \rceil^{1-2\alpha}] \right)  }\\
 & = \lceil a^\frac{1}{\alpha} \rceil + \frac{a^2}{\lceil a^\frac{1}{\alpha} \rceil ^{2\alpha}} - \frac{a^2}{1-2\alpha} \lceil a^\frac{1}{\alpha} \rceil ^{1-2\alpha} \\
 & = C,
 \end{align*}
which, in turn, implies that:
\begin{align*}
 \frac{\sum_{k=0}^\infty (\gamma^k)^2}{\sum_{k=0}^\infty \gamma^k} & \leq \frac{C}{+\infty} = 0.
 \end{align*}
When $\alpha=\frac{1}{2}$, it easy to see that:
\begin{align*}
 \frac{\sum_{k=0}^\infty (\gamma^k)^2}{\sum_{k=0}^\infty \gamma^k} & \leq \lim_{K \to +\infty} \frac{\lceil a^2 \rceil + \frac{a^2}{\lceil a^2 \rceil}+ a^2\ln \left(\frac{K}{\lceil a^2 \rceil} \right)}{\lceil a^2 \rceil + 2a[\sqrt{K+1} - \lceil a^2 \rceil^{\frac{1}{2}}]}   \\
 & = \lim_{K \to +\infty} \frac{\left( \frac{a^2 \lceil a^2 \rceil }{K} \right)}{\left( \frac{2a}{2\sqrt{K+1}} \right)} \\
 & = \lim_{K \to +\infty} \frac{a(\lceil a^2 \rceil)\sqrt{K+1}}{K} \\ 
 & = \lim_{K \to +\infty} a(\lceil a^2 \rceil)\sqrt{\frac{1}{K}+\frac{1}{K^2}} \\
 & = 0,
 \end{align*}
where the first equality follows from L'Hopital's rule.

Finally, if $\alpha \in \left(0,\frac{1}{2}\right)$, it is once again easy to see that:
 \begin{align*}
 \frac{\sum_{k=0}^\infty (\gamma^k)^2}{\sum_{k=0}^\infty \gamma^k} & \leq \lim_{K \to +\infty} \frac{\lceil a^\frac{1}{\alpha} \rceil + \frac{a^2}{\lceil a^\frac{1}{\alpha} \rceil ^{2\alpha}} + \frac{a^2}{1-2\alpha}[K^{1-2\alpha} - \lceil a^\frac{1}{\alpha} \rceil^{1-2\alpha}]}{\lceil a^\frac{1}{\alpha} \rceil + \frac{a}{1-\alpha}[(K+1)^{1-\alpha} - \lceil a^\frac{1}{\alpha} \rceil^{1-\alpha}]}  \\
 & = \lim_{K \to +\infty} \frac{\frac{a^2}{1-2\alpha}(1-2\alpha)K^{-2\alpha}}{\frac{a}{1-\alpha}(1-\alpha)(K+1)^{-\alpha}}\\
 & = \lim_{K \to +\infty} \frac{aK^{-2\alpha}}{(K+1)^{-\alpha}} \\
 & = \lim_{K \to +\infty} \frac{aK^{-\alpha}}{(1+\frac{1}{K})^{-\alpha}}  \\
 & = 0,
 \end{align*}
where the first equality follows again from L'Hopital's rule, and the last equality is true because $\alpha > 0$. \hfill $\square$ \\
\end{proof}

\noindent
This last lemma is again used in the proof of Theorem \ref{th:ftermination}.

\begin{lemma}\label{th:stopmeasure}
The following upper bound holds for the lower-level merit function $V$ (see the definition \eqref{eq:meritV}) at $z$ for every positive $\tau$: 
\begin{equation}\label{eq:Vuppbound}
V(z) \le \frac{1}{\tau} \|G(z)\|_2 + \left\|P_Y(z - \Phi_\tau(z)) - z\right\|_2.  	
\end{equation}
\end{lemma}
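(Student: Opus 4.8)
The plan is to exploit the nonexpansiveness of the Euclidean projector $P_Y$ together with a single application of the triangle inequality, after rewriting $\Phi_\tau(z)$ in terms of $F(z)$ and $\frac1\tau G(z)$. The whole argument is essentially a one-line estimate, so there is no serious obstacle; the only thing to be careful about is inserting the right intermediate point, namely $P_Y(z-\Phi_\tau(z))$, so that the two resulting terms match the right-hand side of \eqref{eq:Vuppbound}.

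Concretely, I would first recall that $z - \Phi_\tau(z) = z - F(z) - \frac1\tau G(z)$, and that $P_Y$ is nonexpansive (a standard property of the Euclidean projector onto the closed convex set $Y$, see e.g. \cite[Th. 1.5.5]{FacchPangBk}), i.e. $\|P_Y(a)-P_Y(b)\|_2 \le \|a-b\|_2$ for all $a,b$. Then I would estimate
\[
\begin{array}{rcl}
V(z) &=& \|P_Y(z-F(z)) - z\|_2 \\[4pt]
&\le& \|P_Y(z-F(z)) - P_Y(z-\Phi_\tau(z))\|_2 + \|P_Y(z-\Phi_\tau(z)) - z\|_2 \\[4pt]
&\le& \|(z-F(z)) - (z-\Phi_\tau(z))\|_2 + \|P_Y(z-\Phi_\tau(z)) - z\|_2 \\[4pt]
&=& \tfrac1\tau \|G(z)\|_2 + \|P_Y(z-\Phi_\tau(z)) - z\|_2,
\end{array}
\]
where the first inequality is the triangle inequality, the second is the nonexpansiveness of $P_Y$, and the last equality uses $(z-F(z)) - (z-\Phi_\tau(z)) = \frac1\tau G(z)$. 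This is exactly \eqref{eq:Vuppbound}.

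Since the bound must hold for every $\tau>0$ and the derivation nowhere uses any assumption on $\tau$ beyond positivity (it only enters through the definition of $\Phi_\tau$), the claim follows immediately. No monotonicity or Lipschitz hypotheses on $F$ or $G$ are needed here; the estimate is purely metric. The main (and only) point requiring attention is the choice of the interpolating projection point, which makes the two terms separate cleanly.
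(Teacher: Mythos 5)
Your proof is correct and is essentially identical to the paper's own argument: both insert the intermediate point $P_Y(z-\Phi_\tau(z))$, apply the triangle inequality, and then use the nonexpansiveness of the Euclidean projector to extract the term $\frac{1}{\tau}\|G(z)\|_2$. No further comment is needed.
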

\begin{proof}
The claim is a consequence of the following chain of relations:
\begin{equation*}\label{eq:chain}
\begin{array}{rcl}
V(z) & = & \|P_Y(z - F(z)) - z\|_2\\[5pt]
& \le & \|P_Y(z - F(z)) - P_Y(z - (F(z) + \frac{1}{\tau} G(z))\|_2\\[5pt]
& & + \|P_Y(z - (F(z) + \frac{1}{\tau} G(z)) - z\|_2\\[5pt]
& \le & \frac{1}{\tau} \|G(z)\|_2 + \|P_Y(z - (F(z) + \frac{1}{\tau} G(z)) - z\|_2,\\[5pt]
\end{array}
\end{equation*}
where the last inequality follows from the nonexpansive property of the projection mapping. \hfill $\square$
\end{proof}

%
%

\bibliographystyle{spmpsci}      
\bibliography{Surbibtikh}   

\end{document}